\theoremstyle{plain}
\newtheorem{theorem}{Theorem}
\newtheorem{lemma}[theorem]{Lemma}
\newtheorem{corollary}[theorem]{Corollary}
\theoremstyle{definition}
\newtheorem{definition}[theorem]{Definition}
\newtheorem{example}[theorem]{Example}
\theoremstyle{remark}
\newtheorem{remark}[theorem]{Remark}
\begin{document}
\title{Characterization of regular checkerboard colourable twisted duals of ribbon graphs}
\author{Xia Guo}
\address{School of Mathematical Sciences, Xiamen University, 361005, Xiamen, China}
\email{guoxia@stu.xmu.edu.cn}
\author{Xian'an Jin}
\address{School of Mathematical Sciences, Xiamen University, 361005, Xiamen, China}
\email{xajin@xmu.edu.cn}
\author{Qi Yan}
\address{School of Mathematical Sciences, Xiamen University, 361005, Xiamen, China}
\email{qiyanmath@163.com }
\thanks{This work is supported by NSFC (No. 11671336) and the Fundamental Research
Funds for the Central Universities (No. 20720190062). }

\begin{abstract}
The geometric dual of a cellularly embedded graph is a fundamental concept in graph theory and also appears in many other branches of mathematics. The partial dual is an essential generalization which can be obtained by forming the geometric dual with respect to only a subset of edges of a cellularly embedded graph. The twisted dual is a further generalization by combining the partial Petrial. Given a ribbon graph $G$, in this paper, we first characterize regular partial duals of the ribbon graph $G$ by using spanning quasi-tree and its related shorter marking arrow sequence set. Then we characterize checkerboard colourable partial Petrials for any Eulerian ribbon graph by using spanning trees and a related notion of adjoint set. Finally we give a complete characterization of all regular checkerboard colourable twisted duals of a ribbon graph, which solve a problem raised by Ellis-Monaghan and Moffatt [\emph{T. Am. Math. Soc.}, \textbf{364(3)} (2012), 1529-1569].
\end{abstract}

\keywords{ribbon graph; twisted dual; regular; checkerboard colourable; quasi-tree; bouquet.}

\subjclass[2000] {05C10; 05C45; 57M15}

\maketitle
\section{Introduction}
\noindent

A cellularly embedded graph, a ribbon graph \cite{BR02}, a band decomposition \cite{GT}, an arrow presentation \cite{C09} and a signed rotation system \cite{GT,Mohar} arising in many different contexts can be used to describe the same object. We sometime use the term ``embedded graph" loosely to mean any of the above representations of graphs on surfaces and refer the reader to the monograph \cite{EM13} for the equivalence among them.

The geometric dual of a cellularly embedded graph is a fundamental concept in graph theory and also appears in many other branches of mathematics.
To unify several Thistlethwaite's theorems \cite{Chmu,Chmu2,Lin1} on relations between the Jones polynomial of virtual links and the topological Tutte polynomial of ribbon graphs constructed from virtual links in different ways,
S. Chmutov, in \cite{C09}, introduced the concept of \emph{partial dual} of a cellularly embedded graph in terms of ribbon graphs. Roughly
speaking, a partial dual is obtained by forming the geometric dual with respect to only a subset of edges of a cellularly embedded graph. Let $G$ be an embedded graph and $A\subseteq E(G)$. We denote by $G^{\delta(A)}$ the partial dual of $G$ with respect to $A$.

It is well known that a plane graph is Eulerian if and only if its geometric dual is bipartite. In \cite{HM}, Huggett and Moffatt extended this result to partial duals of plane graphs and characterized all bipartite partial duals of a plane graphs in terms of all-crossing directions in its medial graph. In \cite{MJ}, the authors characterized all Eulerian partial duals using semi-crossing directions in its medial graph. In \cite{DJM}, the authors characterized all bipartite and Eulerian partial duals for any ribbon graph.

The Petrie dual of a cellularly embedded graph $G$ is formed by retaining the vertices and edges of $G$, but
removing its faces, and replacing them with new faces bounded by the Petrie polygons of $G$  \cite{Jo,WI}. Thus the
underlying graph of the   embedded graph is unchanged, but the surface may be totally different. In the language of a ribbon graph, it is equivalent to giving each edge a half-twist. Let $G$ be a ribbon graph and $A\subseteq E(G)$. The \emph{partial Petrial},
$G^{\tau(A)}$, of $G$ with respect to $A$ is the ribbon graph obtained from $G$ by giving a half-twist to each of the edges in $A$.

In \cite{EM12}, Ellis-Monaghan and Moffatt introduced the concept of twisted duality by combining partial dual and partial Petrial together. It has a number of applications in graph theory, knot theory, matroid theory and so on
\cite{CN1,CN2,QJK,EM15,KRV11,M10,M11,V09,V11}. The twisted dual is also closely related to the medial graph. Let $G$ be a cellularly embedded graph and $G_m$ its medial graph. Actually it is shown in \cite{EM12} that all twisted duals of a cellularly embedded graph $G$ is precisely the set of cellularly embedded graphs whose medial graphs isomorphic (as abstract graphs) to $G_m$. It is clear that a medial graph is $4$-regular checkerboard colourable.
Therefore, in \cite{EM12},  Ellis-Monaghan and Moffatt posed the following two problems:
\begin{enumerate}
\item If $G$ is a 4-regular embedded graph, which of its twisted duals are also 4-regular and checkerboard colorable?
\item Is it possible to characterize those embedded graphs, without degree restrictions, that have a checkerboard colourable twisted dual?
\end{enumerate}

The rest of this paper is organized as follows. In Section 2 we give precise definitions of ribbon graphs, arrow presentations and twisted duality, and also some basic properties of twisted duality.
In Section 3 we introduce the notion of shorter marking arrow sequence set and provide a necessary and sufficient condition
for $G^{\delta(A)}$ to be regular for any ribbon graph $G$ and $A\subseteq E(G)$ in terms of its spanning quasi-tree and the related shorter marking arrow sequence set.
Then we characterize all checkerboard colourable partial Petrials of an Eulerian ribbon graph in terms of its spanning tree and the related notion of adjoint set in Section 4.
In the final Section 5, we give a complete characterization of all regular checkerboard colourable twisted duals of a ribbon graph and an example is provided to explain this characterization.

\section{Preliminaries}
\noindent

A \emph{cellularly embedded graph} is a graph $G$ embedded in a closed surface $\Sigma$ such that every connected component of $\Sigma-G$ is a 2-cell, called a \emph{face} of the cellularly embedded graph. Two cellularly embedded graphs $G\subset \Sigma$ and $G'\subset \Sigma'$ are equivalent, written $G=G'$, if there is a homeomorphism from $\Sigma$ to $\Sigma'$ that sends $G$ to $G'$. As it is much more convenient for our purpose, we shall realise cellularly embedded graphs as ribbon graphs or arrow presentations. In this section we provide a brief overview of ribbon graphs and arrow presentations and  refer the reader to \cite{EM13} for details.

\begin{definition}\cite{BR02}
A ribbon graph $G$ is a (possibly non-orientable) surface with boundary represented as the union of two sets of discs, a set $V(G)$ of vertices and a set $E(G)$ of edges such that\\
(1) the vertices and edges intersect in disjoint line segments, we call them {\it common line segments} as in \cite{Me11};\\
(2) each such common line segment lies on the boundary of precisely one vertex and precisely one edge;\\
(3) every edge contains exactly two such common line segments.

\end{definition}

Let $G=(V(G), E(G))$ be a ribbon graph. By deleting the common line segments from the boundary of a vertex $v\in V(G)$,
we obtain disjoint line segments, called \emph{vertex line segments} of $v$ \cite{Me11}.
By deleting common line segments from the boundary of an edge $e\in E(G)$, we obtain two disjoint line segments,
called \emph{edge line segments} of $e$ \cite{Me11}. See Figure \ref{f 7}.
A ribbon graph $H$ is a ribbon subgraph of $G$ if $V(H)\subseteq V(G)$ and $E(H)\subseteq E(G)$. Furthermore if $V(H)=V(G)$, then $H$ is said to be a spanning ribbon subgraph of $G$. Note that every subset $A$ of $E(G)$ uniquely determines a spanning ribbon subgraph $G[A]=(V(G),A)$ of $G$.

\begin{figure}[htbp]
\centering
\includegraphics[width=3.5in]{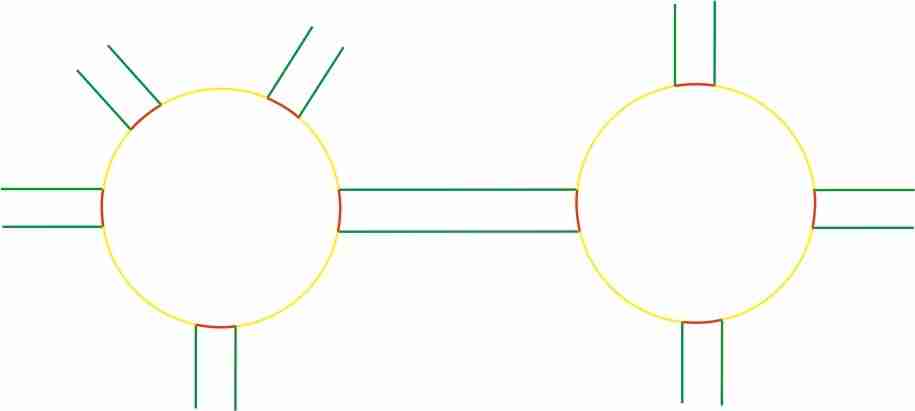}
\caption{Common line segments (red), vertex line segments (yellow) and edge line segments (green).}
\label{f 7}
\end{figure}

\begin{definition}\cite{C09}
An arrow presentation consists of a set of circles (corresponding to vertices) with pairs of labelled arrows (corresponding to edges), called  marking arrows, on them such that there are exactly two marking arrows of each label.
\end{definition}

Both ribbon graphs and arrow presentations are equivalent to cellularly embedded graphs. Two ribbon graphs or arrow presentations are equivalent if they are equivalent as cellularly embedded graphs.
We emphasise that the circles in an arrow presentation are not equipped with any embedding in the plane or $R^3$.

Let $C_{1}$ and $C_{2}$ be two cycles in a ribbon graph $G$. Suppose $v$ is a vertex in both cycles, with edges $e_1,e_2\in C_1$ and edges $f_1,f_2\in C_2$ all incident with $v$ (possibly $e_1=e_2$ or $f_1=f_2$ if $C_{1}$ or $C_{2}$ is a loop).
We say that $C_1$ and $C_2$ alternate at $v$ if these edges $e_1,e_2,f_1,f_2$ appear in the cyclic order as $(e_1\cdots f_1\cdots e_2\cdots f_2 \cdots)$ along the boundary of $v$. A loop $C$ at a vertex $v$ of a ribbon graph $G$ is trivial if there is no cycle or loop in $G$ which alternates with $C$.

Let $G$ be an arrow presentation and $A\subseteq E(G)$. The {\it partial Petrial} of $G$ with respect to $A$ is obtained by reversing the direction of exactly one of the labelled arrows of each edge of $A$, denoted by $G^{\tau(A)}$. The partial Petrial of a ribbon graph $G$ with respect to an edge $e\in E(G)$ is shown in Figure \ref{f 8} (a).

\begin{figure}[htbp]
\centering
\includegraphics[width=5in]{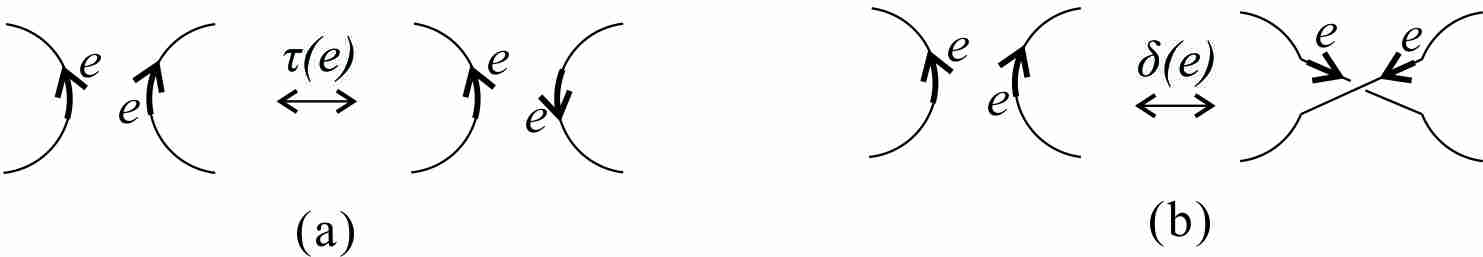}
\caption{Taking the partial Petrial (a) and partial dual (b) of an edge in an arrow presentation.}
\label{f 8}
\end{figure}

The {\it partial dual} $G^{\delta(A)}$ of a ribbon graph $G$ with respect to $A$ is constructed as follows. For each $e\in A$, suppose $e',~ e''$ are the two arrows labelled $e$ in the arrow presentation of $G$. Draw a line segment with an arrow on it directed from the head of $e'$ to the tail of $e''$ and from the head of $e''$ to the tail of $e'$, respectively. Label both of these arrows $e$, then delete $e',~e''$ and the arcs containing them, as shown in Figure \ref{f 8} (b).

Let $A, B\subseteq E(G)$. Then $A\backslash B:=A-B$. We denote by $G/A$ the ribbon graph obtained from $G$ by contracting each edge of $A\subseteq E(G)$ and then $G/A:=G^{\delta(A)}-A$.
Throughout this paper, we will often omit the set brackets in the case of a single element set, i.e. we write $G/e$ instead of $G/\{e\}$.

\begin{lemma}\cite{EM13}\label{l 4}
Let $G$ be a ribbon graph and $A\subseteq E(G)$. Then $G$ is orientable if and only if $G^{\delta(A)}$ is orientable.
\end{lemma}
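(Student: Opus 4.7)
The plan is to reduce to the single-edge case and then verify orientability preservation in the arrow presentation. First I would invoke the standard composition identity $(G^{\delta(A)})^{\delta(B)} = G^{\delta(A\triangle B)}$ (with $\triangle$ the symmetric difference), which lets us build $G^{\delta(A)}$ by successively taking partial duals one edge at a time, together with the involution $(G^{\delta(e)})^{\delta(e)} = G$, which shows that only one direction needs to be proved. Thus it suffices to show: if $G$ is orientable, then $G^{\delta(e)}$ is orientable for every $e \in E(G)$.

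For the single-edge step I would pass to the arrow presentation of $G$. Orientability can be encoded there as a consistency condition: one may choose a cyclic direction on each circle so that for every edge with marking arrows $e'$, $e''$, exactly one of $e'$, $e''$ runs in the direction of its circle. This condition is equivalent to the edge ribbon being gluable to its two vertex discs without a half-twist. Assuming such a consistent orientation is fixed on the circles of $G$, I would analyse the partial-dual surgery of Figure \ref{f 8}(b): the two arrows $e'$, $e''$ are replaced by two new arrows directed from the head of $e'$ to the tail of $e''$ and from the head of $e''$ to the tail of $e'$. Deleting the appropriate arcs of the old circles and re-joining the remaining arcs through the new arrows produces the circles of $G^{\delta(e)}$. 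These new circles inherit a natural cyclic direction from the old ones, and under this inherited orientation the consistency condition survives: for $f \ne e$ the marking arrows are unchanged, while the two new $e$-arrows are matched automatically by the head-to-tail rule used to place them.

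The main obstacle is the bookkeeping in this last step: one must verify that the inherited cyclic directions on the new circles of $G^{\delta(e)}$ are coherent (independent of which old arc one starts from) and simultaneously satisfy the consistency condition for every edge. This requires a small case analysis according to whether $e$ is a loop or a non-loop in $G$ and whether its two arrows $e'$, $e''$ originally lie on one circle or on two distinct circles. In each case the verification reduces to a routine local check around the arrows of Figure \ref{f 8}(b), after which the reduction in the first paragraph completes the proof.
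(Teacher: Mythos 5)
The paper offers no proof of this lemma (it is quoted from \cite{EM13}), so there is nothing in-house to compare against; your overall strategy --- reduce to a single edge via $(G^{\delta(A)})^{\delta(B)}=G^{\delta(A\triangle B)}$ and the involution $(G^{\delta(e)})^{\delta(e)}=G$, then verify the local surgery of Figure \ref{f 8}(b) against an arrow-presentation criterion for orientability --- is the natural one. But the criterion you use is backwards, and that is a genuine error, not a harmless convention choice. With the conventions of this paper (the edge line segments of $e$ join the head of $e'$ to the tail of $e''$ and the tail of $e'$ to the head of $e''$, which is precisely what makes condition (2) of Section 4 the correct checkerboard condition), a single circle carrying two arrows of the same label pointing in the \emph{same} cyclic direction is the \emph{orientable} loop: tracing the boundary yields two components, i.e.\ an annulus, and its partial dual is the two--vertex one--edge graph, as the geometric dual of a trivial orientable loop in the sphere must be. Two arrows in opposite directions give the M\"obius band. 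Hence the correct statement is: $G$ is orientable iff the circles can be directed so that for every edge the two marking arrows \emph{both agree or both disagree} with the directions of their circles. Your condition, that exactly one of $e',e''$ runs with its circle, classifies the annulus as non-orientable and the M\"obius band as orientable. (For a non-loop edge the two versions can be interchanged by reversing one of the two circles involved, which is why the mistake is easy to overlook; for loops, and hence in general, they cannot.)

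A secondary gap is that the step carrying all the content of the lemma --- that the directions inherited by the circles of $G^{\delta(e)}$ are coherent and that every edge, including the two new $e$-arrows, still satisfies the criterion --- is exactly what you defer as ``routine bookkeeping.'' With the corrected criterion this check does succeed (for instance, when $e$ is an orientable loop the arcs $D'$ and $D''$ keep their directions, every other arrow keeps its sign, and the two new $e$-arrows each point against the inherited direction of their new circle, so they again have equal signs), but as written your argument both asserts the wrong criterion and omits the verification that would have exposed it.
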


\begin{definition}\cite{EM12}
Two ribbon graphs $G$ and $H$ are twisted duals if there exist $A_1$,$\cdots$, $A_6\subseteq E(G)$ such that
$$H=G^{1(A_1)\delta(A_2)\tau(A_3)\tau\delta(A_4)\delta\tau(A_5)\tau\delta\tau(A_6)},$$
where the $A_i$ partition $E(G)$.
\end{definition}

We have the following lemma.

\begin{lemma}\label{l 2}
Two ribbon graphs $G$ and $H$ are twisted duals if and only if there exist $B_1, B_2, B_3\subseteq E(G)$ such that\\
$$H=G^{{\tau(B_1)}{\delta(B_2)}{\tau(B_3)}}.$$
\end{lemma}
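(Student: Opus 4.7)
The plan is to reduce both sides of the equivalence to their action edge by edge. Two observations underpin this: (i) on a single edge, $\tau$ and $\delta$ are involutions that together generate a group isomorphic to $S_3$ with the six elements $1,\delta,\tau,\tau\delta,\delta\tau,\tau\delta\tau$; and (ii) operations on disjoint edges commute --- that is, $\tau(e)\tau(e')=\tau(e')\tau(e)$ and $\tau(e)\delta(e')=\delta(e')\tau(e)$ whenever $e\neq e'$ --- as is immediate from the arrow-presentation descriptions in Figure~\ref{f 8}. Consequently each of the two expressions in the lemma acts on every edge independently, and the whole equivalence reduces to a finite check on a single edge.

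For the ($\Rightarrow$) direction, suppose $H=G^{1(A_1)\delta(A_2)\tau(A_3)\tau\delta(A_4)\delta\tau(A_5)\tau\delta\tau(A_6)}$ with the $A_i$ partitioning $E(G)$. I would set
\[
B_1 := A_3\cup A_4\cup A_6,\qquad B_2 := A_2\cup A_4\cup A_5\cup A_6,\qquad B_3 := A_5\cup A_6,
\]
and verify edge by edge that $\tau(B_1)\delta(B_2)\tau(B_3)$ performs the required operation in each $A_i$. For instance, an edge $e\in A_4$ lies in $B_1$ and $B_2$ but not in $B_3$, so it receives $\tau$ and then $\delta$, namely $\tau\delta$; an edge $e\in A_6$ lies in all three $B_i$ and receives $\tau\delta\tau$; an edge $e\in A_1$ lies in none and is untouched. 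The cases $A_2,A_3,A_5$ are analogous.

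For the ($\Leftarrow$) direction, given $H=G^{\tau(B_1)\delta(B_2)\tau(B_3)}$, I would classify each edge by its incidence triple $(x_1,x_2,x_3)\in\{0,1\}^3$ with $B_1,B_2,B_3$. The eight triples collapse onto the six elements of $S_3$: $(0,0,0)$ and $(1,0,1)$ both yield $1$; $(1,0,0)$ and $(0,0,1)$ both yield $\tau$; while $(0,1,0),(1,1,0),(0,1,1),(1,1,1)$ yield $\delta,\tau\delta,\delta\tau,\tau\delta\tau$ respectively. Grouping the edges accordingly yields the partition $A_1,\ldots,A_6$ required by the twisted-dual definition.

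The main point to execute cleanly --- more a bookkeeping issue than a genuine obstacle --- is observation (ii): it is what legitimises rearranging the global composition $\tau(B_1)\delta(B_2)\tau(B_3)$ into a product of independent per-edge sequences, after which the statement collapses to the finite $S_3$-calculation above. Everything else is a case check; no deeper property of ribbon graphs (for example, Lemma~\ref{l 4}) is needed.
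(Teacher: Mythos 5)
Your proof is correct and takes essentially the same route as the paper: both directions rest on the identical set assignments $B_1=A_3\cup A_4\cup A_6$, $B_2=A_2\cup A_4\cup A_5\cup A_6$, $B_3=A_5\cup A_6$ and the corresponding eight-triple collapse onto the six group elements, with the per-edge reduction justified by the commutation of operations on disjoint edges (Lemma~\ref{l 1}(2)--(3)). Your write-up merely makes explicit the edge-by-edge verification that the paper leaves to the reader.
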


\begin{proof} Let $A\subseteq E(G)$. Denote by $\bar{A}$ the complement of the subset $A$, i.e. $\bar{A}=E(G)-A$. We take $A_1=(\bar{B_1}\cap \bar{B_2}\cap \bar{B_3})\cup(B_1\cap \bar{B_2}\cap B_3)$, $A_2=\bar{B_1}\cap B_2\cap \bar{B_3}$, $A_3=(B_1\cap \bar{B_2}\cap \bar{B_3})\cup(\bar{B_1}\cap \bar{B_2}\cap B_3)$, $A_4=B_1\cap B_2\cap \bar{B_3}$, $A_5=\bar{B_1}\cap B_2\cap B_3$, $A_6=B_1\cap B_2\cap B_3$, then $G^{1(A_1)\delta(A_2)\tau(A_3)\tau\delta(A_4)\delta\tau(A_5)\tau\delta\tau(A_6)}=G^{{\tau(B_1)}{\delta(B_2)}{\tau(B_3)}}$. Conversely, we take $B_1=A_3\cup A_4\cup A_6$, $B_2=A_2\cup A_4\cup A_5\cup A_6$, $B_3=A_5\cup A_6$, then $G^{{\tau(B_1)}{\delta(B_2)}{\tau(B_3)}}=G^{1(A_1)\delta(A_2)\tau(A_3)\tau\delta(A_4)\delta\tau(A_5)\tau\delta\tau(A_6)}$.
\end{proof}

Let $G$ be a ribbon graph. Then there is a group action of $\mathfrak{G}=~<\delta,\tau|{ \delta}^2, {\tau}^2, {(\delta\tau)}^3>$ on $(G,E_1)$ given by $g(G,E_1):=(G^{g(E_1)}, E_1)$ for $g\in \mathfrak{G}$, $E_1\subseteq E(G)$.

\begin{lemma}\cite{EM13}\label{l 1}
Let $G$ be a ribbon graph, $A$,$B \subseteq E(G)$ and $g,h \in \mathfrak{G}$. Then the following hold:\\
(1) There is a natural bijection between the edges of $G$ and the edges of $G^{g(A)}$.\\
(2) If $A\cap B = \emptyset$, then $G^{g(A)h(B)}=G^{h(B)g(A)}$.\\
(3) $G^{g(A)}=(G^{g(e)})^{g(A \backslash \{ e \} )}$, when $e\in A$, and thus twisted duals can be formed one edge at a time.\\
\end{lemma}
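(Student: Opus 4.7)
The three parts of the lemma break naturally into: a labeling bookkeeping statement (1), a commutativity statement (2), and an ``induction on $|A|$'' statement (3) that follows formally from (2). I would prove them in that order, handling (1) and (3) quickly and spending most of the effort on (2).

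For (1), the plan is to unpack the definitions of $\delta(e)$ and $\tau(e)$ on the arrow presentation and observe directly that each preserves the set of labels: the partial Petrial $\tau(e)$ only reverses one of the two arrows labelled $e$, and the partial dual $\delta(e)$ replaces the two arrows labelled $e$ by two new arrows also labelled $e$ (deleting the enclosed arcs and relabelling the resulting circles as vertices). Iterating edge by edge over $A$, and then letter by letter over any factorisation of $g$ in $\{\delta,\tau\}$, the label set on $G^{g(A)}$ is in canonical bijection with $E(G)$.

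The substantive content lies in (2). My strategy is to first reduce to the case of single-edge elementary operations: because $g(A)$ and $h(B)$ are, by definition, iterated compositions of operations of the form $\delta(e)$ or $\tau(e)$ with $e\in A\cup B$, and because $A\cap B=\emptyset$, it suffices to prove that for $e\ne f$ the operations $\delta(e),\tau(e)$ commute with $\delta(f),\tau(f)$. This reduction done, I would argue from the arrow-presentation definitions that each elementary operation is local to the two arrows carrying its label: $\tau(e)$ alters only one arrow labelled $e$, and $\delta(e)$ only removes the two arrows labelled $e$ together with the short arcs of the vertex circles they bound, replacing them by newly drawn, appropriately reconnected arrows also labelled $e$. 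Since arrows labelled $f\ne e$ are untouched, and their cyclic positions among all non-$e$ arrows of the presentation are preserved, the outcome of $\delta(e)$ followed by $\delta(f)$ (or any other pair with $e\ne f$) is the same as when the two operations are performed in the opposite order.

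Part (3) then follows by combinatorial bookkeeping. Writing $g=g_k\cdots g_1$ as a word in $\{\delta,\tau\}$, the definition of $g(A)$ is
$$G^{g(A)} \;=\; G^{g_1(A)\,g_2(A)\,\cdots\, g_k(A)},$$
and each elementary operator $g_i(A)$ can be performed one edge at a time: $G^{g_i(A)}=G^{g_i(e)\,g_i(A\setminus\{e\})}$. Applying (2) repeatedly to the disjoint sets $\{e\}$ and $A\setminus\{e\}$, one moves all single-edge operations on $e$ to the left and all single-edge operations on $A\setminus\{e\}$ to the right, obtaining $G^{g(A)}=(G^{g(e)})^{g(A\setminus\{e\})}$, exactly as claimed.

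The main obstacle will be making the local-modification argument in (2) precise for the partial dual: one must verify, directly on the arrow presentation, that removing the arrows labelled $e$ and reconnecting the cut arcs really leaves the combinatorial data around the arrows labelled $f$ intact (and in particular does not change what $\delta(f)$ would do). Once this local independence is established, the rest of the lemma reduces to indexing carefully over the letters of $g,h$ and the edges of $A,B$.
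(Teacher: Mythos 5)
The paper gives no proof of this lemma --- it is quoted verbatim from Ellis-Monaghan and Moffatt's monograph --- so there is nothing internal to compare against; your argument (label preservation for (1), locality of the single-edge operations $\tau(e)$ and $\delta(e)$ on the arrow presentation to get commutativity for disjoint edge sets in (2), and formal reshuffling via (2) for (3)) is exactly the standard proof and is correct. The one point worth making explicit when you write it up is the locality of $\delta(e)$: the arcs deleted and redrawn are precisely those carrying the two arrows labelled $e$, so the arrows labelled $f\ne e$ and the arcs between them are literally unchanged, even though the partition of arrows into circles may change --- and since $\tau(f)$ and $\delta(f)$ are defined purely in terms of the arrows labelled $f$ and their incident arcs, this suffices.
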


In this paper, we shall identify the edges of $G$ with those of $G^{g(A)}$ under the natural bijection.
A ribbon graph is {\it checkerboard colourable} if there is an assignment of the two colours, say, black and white, to its boundary components such that two edge line segments of any edge receive different colours.
If a ribbon graph $G$ has equipped with a checkerboard colouring, then the adjacent vertex line segments of each vertex of $G$ must have different colours, which implies that each vertex of a checkerboard colourable ribbon graph has even number of vertex line segments, i.e. a checkerboard colourable ribbon graph is Eulerian (here we do not require that an Eulerian graph must be connected).

A {\it bouquet} is a ribbon graph with exactly one vertex. A {\it quasi-tree} is a ribbon graph with exactly one boundary component. Notice that there is a natural bijection between the vertex boundaries of $G^{\delta(A)}$ and the boundary components of $G[A]$. As a consequence, we have the following.

\begin{lemma}\label{l 6}
Let $G$ be a ribbon graph and $A\subseteq E(G)$. Then $G^{\delta(A)}$ is a bouquet if and only if $G[A]$ is a quasi-tree of $G$.
\end{lemma}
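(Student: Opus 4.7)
The plan is to deduce this directly from the natural bijection stated in the sentence immediately preceding the lemma: the vertices of $G^{\delta(A)}$ correspond bijectively to the boundary components of the spanning ribbon subgraph $G[A]$. Once that bijection is in hand, the lemma is immediate, since $G^{\delta(A)}$ has exactly one vertex (is a bouquet) if and only if $G[A]$ has exactly one boundary component (is a quasi-tree).

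To make this rigorous, I would first recall how the partial dual is constructed in the arrow-presentation model described in Figure~\ref{f 8}(b). For each edge $e\in A$, the two arrows labelled $e$ in the arrow presentation of $G$ are replaced by two new arrows that turn each old head into a new tail and vice versa, and the old vertex arcs containing $e',e''$ are deleted. A vertex of $G^{\delta(A)}$ is then a circle obtained by starting on a vertex-arc of $G$ (or equivalently, on a vertex-line segment of $G[A]$) and following the arcs of the vertices of $G$, jumping along an edge of $A$ each time an arrow of $A$ is encountered. This tracing procedure is precisely the same procedure that traces the boundary components of $G[A]$, since traversing the boundary of an edge in $A$ within $G[A]$ corresponds exactly to the jump prescribed by the partial dual construction on that edge's arrows.

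Consequently the circles that form the vertices of $G^{\delta(A)}$ are in bijection with the boundary components of $G[A]$: both sets of circles are obtained by the same combinatorial rule. Since a bouquet is by definition a ribbon graph with exactly one vertex and a quasi-tree is by definition a ribbon graph with exactly one boundary component, the equivalence
\[
|V(G^{\delta(A)})|=1\ \Longleftrightarrow\ |\partial(G[A])|=1
\]
gives the lemma.

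The only non-trivial ingredient is the construction/verification of the bijection, which is essentially a bookkeeping exercise on the arrow presentation and is already implicit in the definitions reviewed in Section~2; I would present it by comparing the two tracing procedures edge by edge. Because this correspondence is the standard description of partial duality underlying Chmutov's original definition \cite{C09}, I expect no substantial obstacle beyond writing the bijection cleanly.
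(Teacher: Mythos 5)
Your proposal is correct and follows exactly the paper's route: the lemma is stated there as an immediate consequence of the natural bijection between the vertex boundaries of $G^{\delta(A)}$ and the boundary components of $G[A]$, which is precisely the correspondence you construct by comparing the two tracing procedures in the arrow presentation. Your write-up simply makes explicit the bookkeeping that the paper leaves implicit.
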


In the following we will always assume that the ribbon graph is connected and will do so without further comment.

\section{The regular partial duals}
\noindent

In this section we study the regular partial duals of a ribbon. We first give the definition of shorter marking arrow sequence set.

Let $C$ be the (unique) circle of an arrow presentation of a bouquet $G$ with $m$ edges $e_1,\cdots, e_m$, $m\in \mathbb{N}$. We denote by $e_{k_1}e_{k_2}\cdots e_{k_{2m}}$, $e_{k_i}\in E(G)$, $1\leq k_i\leq m$, the $2m$ marking arrows along $C$ in a clockwise direction.
For the edge $e_i$, $1\leq i\leq m$, suppose $e_{k_i}$ and $e_{k_{i+j+1}}$ are two marking arrows of $e_i$ in $C$, we can divide $C$ into two disjoint arcs $D'_i$ and $D''_i$, in which, $e_{k_i},e_{k_{i+1}},\cdots,e_{k_{i+j}}$ and $e_{k_{i+j+1}},e_{k_{i+j+2}},\cdots,e_{k_{i-1}}$ lie, respectively. Denoted by
$D'_i=e_{k_i}e_{k_{i+1}}\cdots e_{k_{i+j}}$ and $D''_i=e_{k_{i+j+1}}e_{k_{i+j+2}}\cdots e_{k_{i-1}}$.
Let $|D'_i|$ and $|D''_i|$ be the number of marking arrows contained in $D'_i$ and $D''_i$, respectively.
We suppose that

$$ D_i=\left\{
\begin{array}{lc}
D'_i~or~D''_i,             & {if ~ |D'_i|=|D''_i|;}\\
D'_i,           & {if ~ |D'_i|<|D''_i|;}\\
D''_i,           & {if ~ |D'_i|>|D''_i|.}
\end{array} \right. $$
We call $D_i$ a shorter marking arrow sequence. Let $\mathcal{C}(G)=\{ D_i~|~1 \leq i \leq m\}$ be the set of all shorter marking arrow sequences of $G$ (with a fixed cyclic ordering of marking arrows along $C$).

\begin{example}\label{ex 2}
An arrow presentation of a bouquet $G_1$ with $E(G_1)=\{e_1,\cdots, e_5\}$ is shown in Figure \ref{f 18}, in which, we shall write $i$ to represent the edge $e_i$ for convenience, $1\leq i\leq 5$. $C(G_1)=e_1e_2e_4e_3e_2e_1e_3e_4e_5e_5$. Then we have $\mathcal{C}=\{ D_1,D_2,\cdots,D_5\}$, where $D_1=e_1e_2e_4e_3e_2$ or $D_1=e_1e_3e_4e_5e_5$, $D_2=e_2e_4e_3$,
$D_3=e_3e_2e_1$, $D_4=e_4e_5e_5e_1e_2$ or $D_4=e_4e_3e_2e_1e_3$, $D_5=e_5$.
\begin{figure}[htbp]
\centering
\includegraphics[width=1.3in]{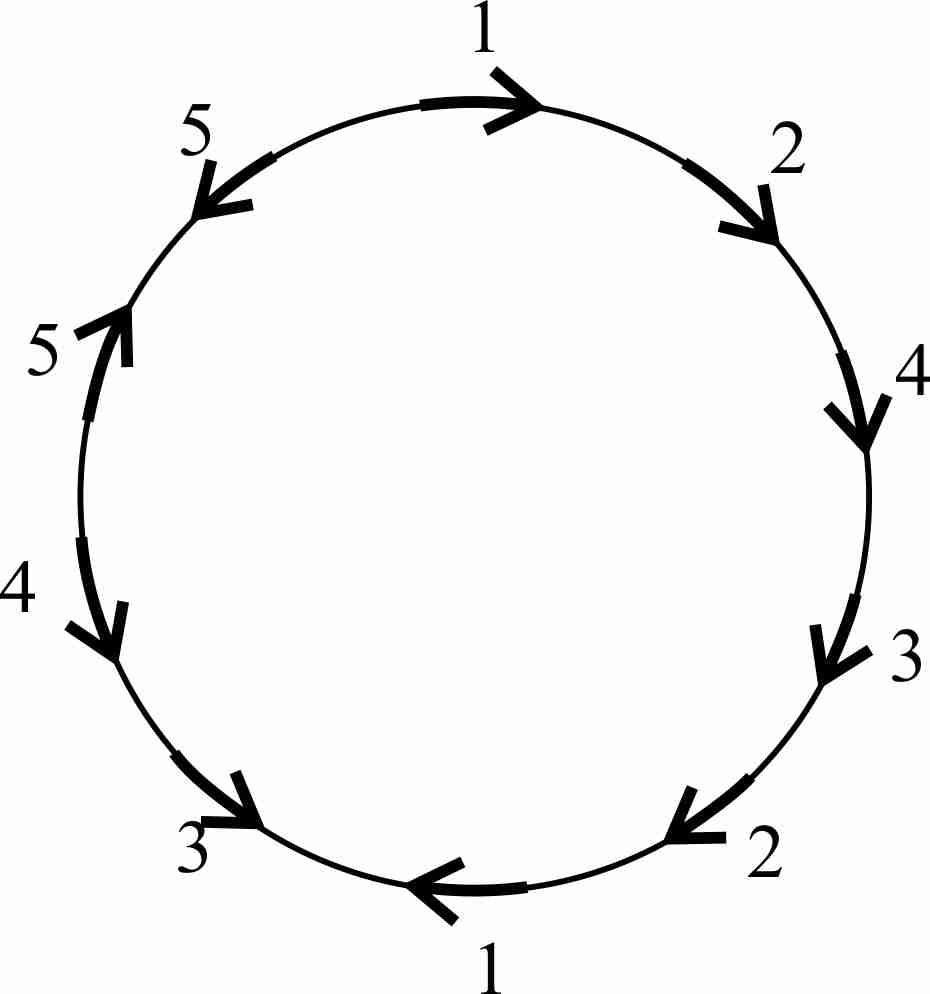}
\caption{Arrow presentation of the bouquet $G_1$.}
\label{f 18}
\end{figure}
\end{example}

\begin{definition}
Let $\{C_1, C_2, \cdots, C_n\}\subseteq \mathcal{C}(G)$. If the corresponding edge (i.e. the initial edge) of each $C_i$ $(i=1,2,\cdots,n)$ is an orientable loop and, for any pair $C_i$ and $C_j$,  $C_i\subseteq C_j$, $C_j\subseteq C_i$ or $C_i \cap C_j =\emptyset$, then we call $\{C_1, C_2, \cdots, C_n\}$ a shorter marking arrow sequence set.
\end{definition}

We take the convention that the empty set is also a shorter marking arrow sequence set.

\begin{example}\label{ex 1}
Suppose $G_2$ is a bouquet, $E(G_2)=\{e_1,\cdots, e_{10}\}$ and
$$C(G_2)=e_1e_7e_8e_{10}e_1e_{10}e_3e_4e_6e_9e_8e_4e_7e_9e_3e_2e_5e_6e_5e_2.$$
It is shown in Figure \ref{f 11}, in which $i$ represents the edge $e_i$, $1\leq i\leq 10$.
Then $\{C_1,C_2,C_3,C_4\}$ is a shorter marking arrow sequence set of $G_2$, where $C_1=e_2e_5e_6e_5$, $C_2=e_1e_7e_8e_{10}$, $C_3=e_9e_8e_4e_7$ and $C_4=e_3e_4e_6e_9e_8e_4e_7e_9$.
\begin{figure}[htbp]
\centering
\includegraphics[width=1.8in]{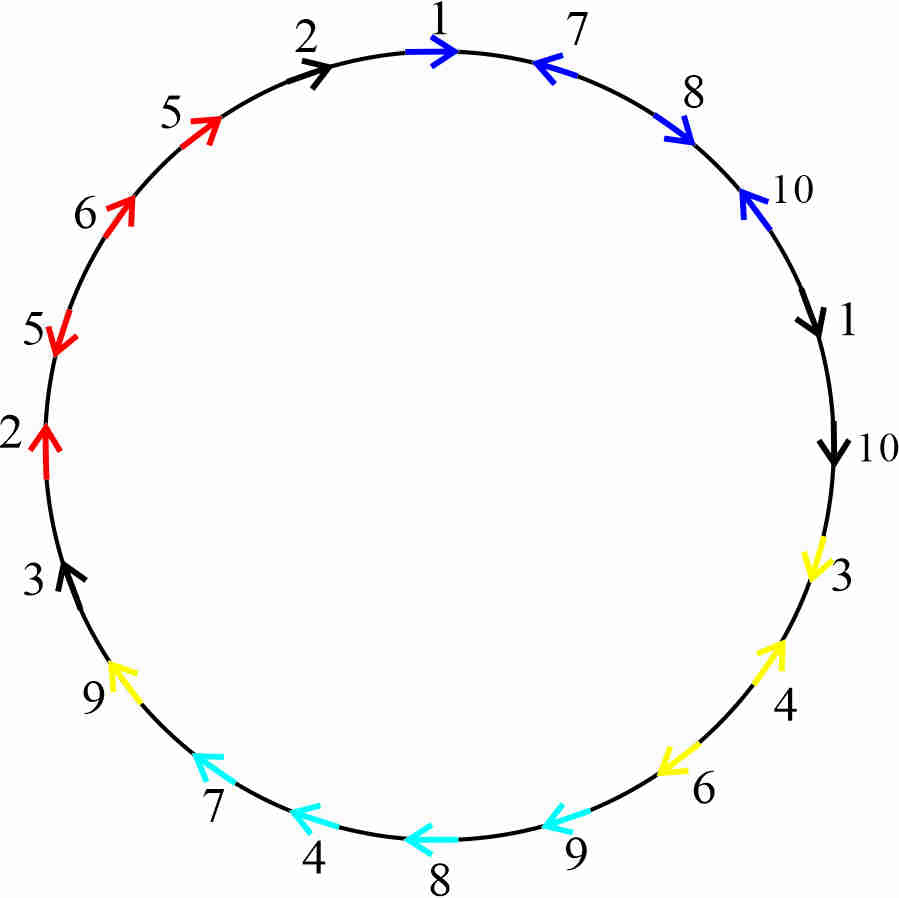}
\caption{Arrow presentation of the bouquet $G_2$.}
\label{f 11}
\end{figure}
\end{example}

\begin{definition}
Let $\{C_1,C_2,\cdots,C_n\}$ be a shorter marking arrow sequence set of the bouquet $G$.
For $1\leq k\leq n$, we define the length $d(C_k)$ of $C_k$ to be the number of marking arrows contained in
$ C_k \backslash
\mathop{\cup}\limits_{\scriptstyle 1\leq i\leq n\atop\scriptstyle i\neq k }
 C_i $.
\end{definition}

For example, in Figure \ref{f 4} (up and left), suppose that $C_1=e_1e_4e_5e_2e_6e_7e_8e_2e_9$ (colored green and red), $C_2=e_2e_6e_7e_8$ (colored red) and $C_3=e_3e_{10}e_{11}e_{12}e_{13}$ (colored blue). Then $d(C_1)=5, d(C_2)=4$, and $d(C_3)=5$.

\begin{lemma}\label{31}
With notations as above, the degree sequence of $G^{\delta(\{e_1,e_2,\cdots,e_n\})}$ is $d(C_1)$, $d(C_2)$, $\cdots$, $d(C_n)$, $2m-d(C_1)-d(C_2)-\cdots-d(C_n)$, where $e_i$ is the edge corresponding to $C_i$.
\end{lemma}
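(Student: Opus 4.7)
The plan is to match each $C_i$ with a distinguished boundary component of $G[A]$ and verify that its degree in $G^{\delta(A)}$ equals $d(C_i)$. Recall that the vertices of $G^{\delta(A)}$ are in natural bijection with the boundary components of $G[A]$, and that the degree of such a vertex equals the number of marking arrows on the corresponding circle in the arrow presentation of $G^{\delta(A)}$: one arrow for each marking arrow of $\bar A$ lying on the vertex arcs of that boundary, plus one new arrow for each side of an $A$-edge meeting it.

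Since each $e_i$ is an orientable loop, its two edge line segments lie on distinct boundary components of $G[A]$, so I would set $B_i$ to be the one on the $C_i$-side of $e_i$. The shorter-marking-arrow-sequence-set condition guarantees that the $B_i$ are pairwise distinct and that $G[A]$ has exactly $n+1$ boundary components; I will call the leftover one $B_0$. The geometric heart of the argument is the claim that, as one traces $B_i$, the boundary hugs the inside of $C_i$ but detours around each maximal $C_j \subsetneq C_i$ rather than entering it. Equivalently, the vertex arcs of $B_i$ cover exactly the portion of $C_i$ outside every maximal $C_j \subsetneq C_i$. A short verification based on the nested/disjoint property shows that any $A$-marking arrow of $G$ lying inside $C_i$ must come from an edge $e_j$ with $C_j \subseteq C_i$, which is what makes this claim natural.

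Granting the geometric claim, the count is immediate. The $\bar A$-marking arrows on $B_i$'s circle are exactly the $\bar A$-arrows of $G$ in $C_i \setminus \bigcup_{C_j \subsetneq C_i} C_j$. The new $A$-marking arrows on $B_i$'s circle come one from $e_i$ and one from each maximal $C_j \subsetneq C_i$, and these are in bijection with the $A$-marking arrows of $G$ in $C_i \setminus \bigcup_{C_j \subsetneq C_i} C_j$, namely the initial arrow of $e_i$ together with the ``other'' (non-$C_j$-side) arrow of $e_j$ for each maximal $C_j \subsetneq C_i$. Summing gives the degree of $B_i$ equal to $|C_i \setminus \bigcup_{C_j \subsetneq C_i} C_j| = d(C_i)$. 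The degree of the leftover vertex $B_0$ is then forced by the handshake identity $\sum_v \deg(v) = 2|E(G^{\delta(A)})| = 2m$, yielding $2m - d(C_1) - \cdots - d(C_n)$.

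The main obstacle is the geometric claim about how $B_i$ traces $C_i$. I would prove it by induction on the depth of nesting of the $C_i$'s. At the base case, for an innermost $C_i$, the nested/disjoint condition prevents any other $C_j$ from having any marking arrow strictly inside $C_i$, so $B_i$ is precisely the immediate ``$C_i$-side'' of $e_i$ and the count reduces to $|C_i| = d(C_i)$. The inductive step peels off innermost $C_j$'s one by one, exploiting Lemma~\ref{l 1}(3) to commute partial-dual operations and to track how each newly introduced outer loop modifies the boundary structure of the intermediate ribbon graph.
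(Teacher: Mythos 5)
Your argument is correct and is essentially a careful write-up of what the paper's Figure \ref{f 4} is meant to convey --- the paper offers no written proof beyond declaring the lemma obvious and pointing to that figure, so your boundary-component count (old non-$A$ arrows plus new $A$-arrows on the circle $B_i$ giving $|C_i \setminus \bigcup_{C_j \subsetneq C_i} C_j| = d(C_i)$, with the last vertex's degree forced by the handshake identity) is exactly the intended computation. One small wording caveat: the fact that the two edge line segments of $e_i$ lie on distinct boundary components of $G[A]$ does not follow from orientability of $e_i$ alone but from orientability together with the pairwise nested-or-disjoint (i.e.\ non-alternating) condition on the $C_j$, which you do invoke in the following sentence, so the argument as a whole stands.
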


\begin{proof}
It is obvious and Figure \ref{f 4} illustrates the proof of the lemma.
\end{proof}
\begin{figure}[bhtp]
\centering
\includegraphics[width=5.5in]{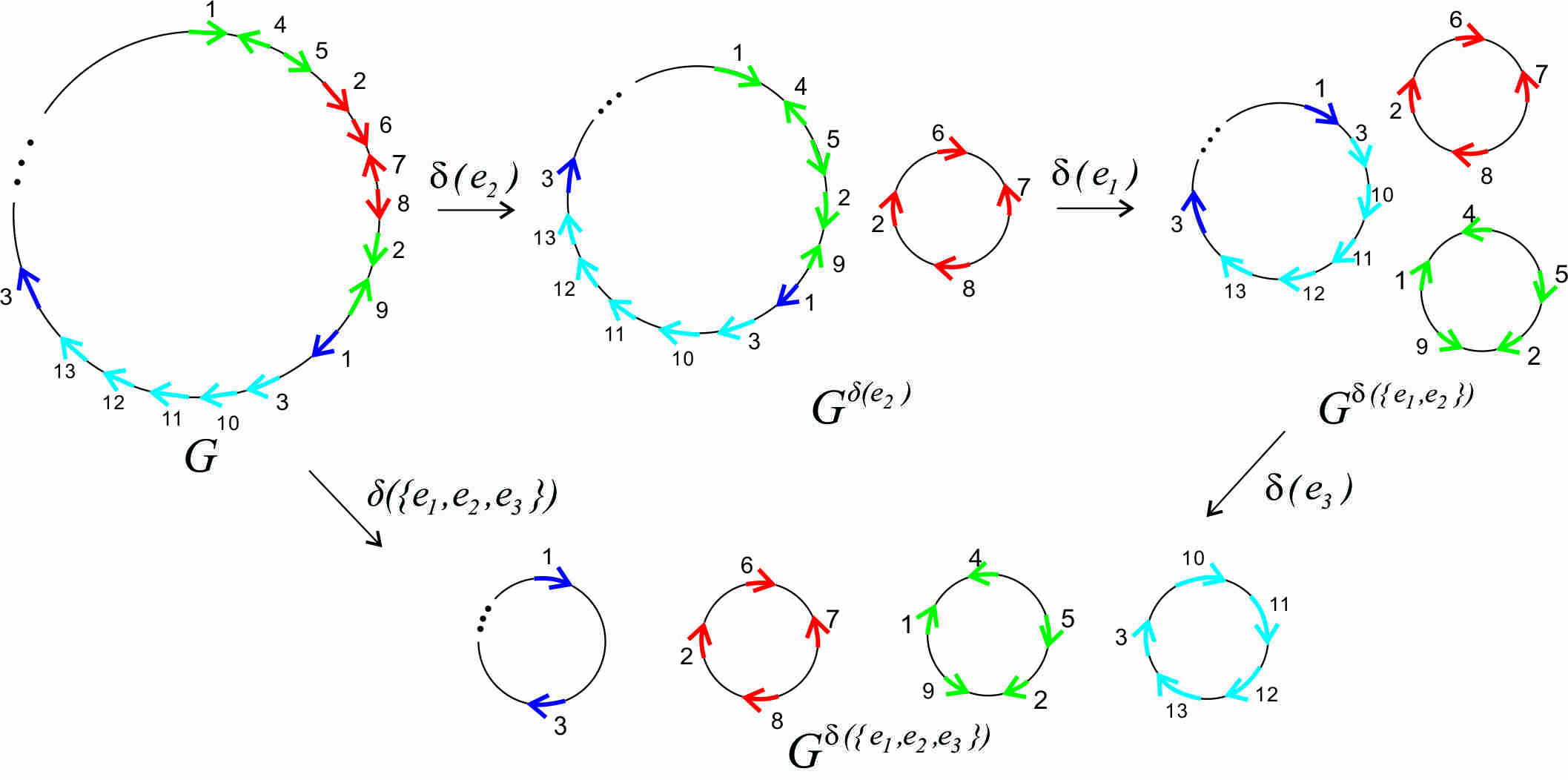}
\caption{Shorter marking arrow sequences and partial duals.}
\label{f 4}
\end{figure}

\begin{lemma}\label{32}
Let $G$ be a ribbon graph and $|E(G)|=m$. If $G^{\delta(E_1)}$ is a bouquet and $G^{\delta(E_1)\delta(E_2)}$ is $k$-regular for $E_1, E_2\subseteq E(G)$ and let $n=\frac{2m}{k}-1$, then $| E_2| \geq n$.
\end{lemma}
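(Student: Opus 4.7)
The plan is to bound the number of vertices of $G^{\delta(E_1)\delta(E_2)}$ from above by $|E_2|+1$ and compare this with the exact vertex count forced by $k$-regularity.

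First I would observe that partial duality preserves the edge set (Lemma~\ref{l 1}(1)), so $G^{\delta(E_1)\delta(E_2)}$ still has $m$ edges; being $k$-regular it therefore has exactly $\frac{2m}{k}$ vertices by the handshake lemma.

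Next, set $H:=G^{\delta(E_1)}$. The identification of edges given by Lemma~\ref{l 1}(1) lets us regard $E_2$ as a subset of $E(H)$, so $G^{\delta(E_1)\delta(E_2)}=H^{\delta(E_2)}$. By the remark immediately preceding Lemma~\ref{l 6}, the vertices of $H^{\delta(E_2)}$ are in natural bijection with the boundary components of the spanning ribbon subgraph $H[E_2]$. Since $H$ is a bouquet, $H[E_2]$ has only one vertex as well; hence $H[E_2]$ is itself a bouquet, with $|E_2|$ edges.

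The key step is then the bound: a bouquet with $j$ edges has at most $j+1$ boundary components. This is a one-line Euler-characteristic computation. Viewing a bouquet with $j$ edges as a surface with boundary, we have $\chi=1-j$; capping each of its $b$ boundary circles with a disc produces a closed (possibly disconnected or non-orientable) surface of Euler characteristic $1-j+b$, which is at most $2$, so $b\leq j+1$. Applying this to $H[E_2]$ gives that $H^{\delta(E_2)}$ has at most $|E_2|+1$ vertices, i.e. $\tfrac{2m}{k}\leq |E_2|+1$, which rearranges to $|E_2|\geq \tfrac{2m}{k}-1=n$.

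I expect no serious obstacle: once the natural bijection before Lemma~\ref{l 6} is invoked, the argument reduces to the elementary topological fact $b\leq j+1$ for bouquets. The only thing to be careful about is to consistently identify the edges of $G$ with those of $H$ via Lemma~\ref{l 1}(1), so that writing $H[E_2]$ is legitimate.
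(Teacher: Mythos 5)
Your proof is correct, and it reaches the paper's key inequality $|V(G^{\delta(E_1)\delta(E_2)})|\leq |E_2|+1$ by a genuinely different route. The paper's own argument is local and iterative: it observes that a single-edge partial dual changes the vertex count by at most one, so starting from the single vertex of the bouquet $G^{\delta(E_1)}$ and dualizing the $|E_2|$ edges of $E_2$ one at a time can produce at most $1+|E_2|$ vertices. You instead use the global correspondence stated before Lemma~\ref{l 6} between vertices of $H^{\delta(E_2)}$ and boundary components of $H[E_2]$, and then bound the number of boundary components of a bouquet with $j$ edges by $j+1$ via an Euler characteristic computation. Both are short and valid; the paper's version needs only the elementary one-edge observation, while yours makes the topological content explicit and would generalize directly to bounding $|V(H^{\delta(A)})|$ for non-bouquet $H$ as well. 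One small point of care in your Euler characteristic step: the bound $\chi\leq 2$ for the capped-off surface requires connectedness, which does hold here because a bouquet (one vertex disc with bands attached) is connected -- so your parenthetical ``possibly disconnected'' should be dropped, since for a disconnected closed surface $\chi$ can exceed $2$. With that cosmetic fix the argument is complete.
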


\begin{proof}
Let $H$ be a ribbon graph and $e\in E(H)$. It is obvious that $|V(H)|-1\leq|V(H^{\delta(e)})|\leq |V(H)|+1$. If $|E_2|<n$, then we have $|V(G^{\delta(E_1)\delta(E_2)})|\leq |V(G^{\delta(E_1)})|+|E_2|=1+|E_2|<1+n=\frac{2m}{k}$, contradicting the $k$-regularity of $G^{\delta(E_1)\delta(E_2)}$.
\end{proof}

Let $A,B$ be two sets. We denote by $A\Delta B$ the symmetric difference of $A$ and $B$.

\begin{theorem}\label{main1}
Let $G$ be a ribbon graph, $E(G)=\{e_1,\cdots, e_m\}$ and $A\subseteq E(G)$.  Then $G^{\delta(A)}$ is $k$-regular if and only if there exists a spanning quasi-tree $Q$ of $G$, a shorter marking arrow sequence set
$S=\{C_1, \cdots, C_n\}$ of $G^{\delta(E(Q))}$ with $n=\frac{2m}{k}-1$  and  $d(C_i)=k$ for $i=1,2,\cdots,n$, and
$A=E(Q) \Delta \{e_1,\cdots,e_n\}$ where $e_i$ is the edge corresponding to the shorter marking arrow sequence $C_i$ for $i=1,2,\cdots,n$.
\end{theorem}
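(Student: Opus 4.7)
The plan is to reduce the problem to partial duals of a bouquet and then combine the structural results in Lemmas~\ref{l 6} and~\ref{31} with a short Euler-characteristic argument. The key preliminary observation is the identity $G^{\delta(A)\delta(B)} = G^{\delta(A\Delta B)}$, which follows from Lemma~\ref{l 1}(3) (applying $\delta$ one edge at a time) together with the relation $\delta^2 = 1$ in $\mathfrak{G}$. For any spanning quasi-tree $Q$ of $G$, Lemma~\ref{l 6} implies that $B := G^{\delta(E(Q))}$ is a bouquet, and the identity rewrites $G^{\delta(A)} = B^{\delta(T)}$ with $T := E(Q) \Delta A$.

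For the sufficiency direction, I will apply Lemma~\ref{31} directly to the bouquet $B$ and the set $T = \{e_1, \ldots, e_n\}$: this yields the degree sequence $d(C_1), \ldots, d(C_n), 2m - \sum_{i=1}^n d(C_i)$ for $B^{\delta(T)}$. Under the hypotheses $d(C_i) = k$ and $n = 2m/k - 1$, the remainder term equals $2m - nk = k$, so every vertex of $G^{\delta(A)} = B^{\delta(T)}$ has degree $k$.

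For the necessity direction, I will first choose any spanning tree $T^*$ of the underlying graph of $G^{\delta(A)}$ (which is automatically a spanning quasi-tree of this ribbon graph). The $k$-regularity forces $|E(T^*)| = |V(G^{\delta(A)})| - 1 = 2m/k - 1 = n$. Setting $E(Q) := E(T^*) \Delta A$, the identity together with Lemma~\ref{l 6} shows that $Q$ is a spanning quasi-tree of $G$ and $B := G^{\delta(E(Q))}$ is a bouquet satisfying $B^{\delta(E(T^*))} = G^{\delta(A)}$. The hard part is to verify that $T := E(T^*) = \{e_1, \ldots, e_n\}$ gives rise to a shorter marking arrow sequence set of $B$ with $d(C_i) = k$ for each $i$. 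Here I plan to use the bijection (stated just before Lemma~\ref{l 6}) between vertex boundaries of $B^{\delta(T)}$ and boundary components of $B[T]$: since $B^{\delta(T)}$ has $n+1$ vertices, $B[T]$ has $n+1$ boundary components, and Euler's formula gives $\chi(B[T]) = 1 - n + (n+1) = 2$. By the classification of surfaces, this forces $B[T]$ to be a sphere with one vertex and $n$ attached loops, so every $e_i$ is an orientable loop of $B$ and the marking arrows of distinct $e_i, e_j$ do not interleave on the boundary circle of $B$. A short case analysis on the two candidates $D'_i, D''_i$ for each $e_i$ then lets me select $C_i$ so that $\{C_1, \ldots, C_n\}$ is pairwise nested or disjoint, giving the required shorter marking arrow sequence set. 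Lemma~\ref{31} applied to this set, combined with the $k$-regularity, forces $d(C_i) = k$ for every $i$, while $A = E(Q) \Delta \{e_1, \ldots, e_n\}$ holds by construction.
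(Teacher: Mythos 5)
Your proof is correct. The sufficiency half is essentially the paper's own argument: rewrite $G^{\delta(A)}$ as $\bigl(G^{\delta(E(Q))}\bigr)^{\delta(\{e_1,\dots,e_n\})}$ and read the degree sequence off Lemma~\ref{31}, with $2m-nk=k$. The necessity half, however, takes a genuinely different route. The paper starts from a spanning tree $Q_0$ of $G$, uses the counting bound of Lemma~\ref{32} to get $|A\Delta E(Q_0)|\geq n$, and then runs an iterative repair (its Case 2): repeatedly absorbing non-orientable loops and alternating pairs of $A\Delta E(Q_0)$ into the quasi-tree until the leftover edges are orientable and pairwise non-alternating, finally recounting to see that exactly $n$ remain. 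You instead take a spanning tree $T^*$ of $G^{\delta(A)}$ itself, which has the right cardinality $n$ immediately from $k$-regularity, set $E(Q)=E(T^*)\Delta A$, and extract the whole structure of the $n$ leftover loops in the bouquet $B=G^{\delta(E(Q))}$ from one Euler-characteristic computation: $B[E(T^*)]$ has one vertex, $n$ edges and, by the bijection preceding Lemma~\ref{l 6}, $n+1$ boundary components, so its capped-off surface is a sphere and the $n$ loops are orientable and pairwise non-interleaved. This bypasses Lemma~\ref{32} and the iteration entirely, at the modest cost of invoking the classification of surfaces. The only step you leave sketchy --- selecting the shorter arcs $C_i$ so that they are pairwise nested or disjoint --- does go through, and in fact needs no global coordination: for a non-interleaved pair $e_i,e_j$, the two ``outer'' arcs (the arc of $e_i$ containing both arrows of $e_j$, and symmetrically) cover the whole circle and intersect, so their lengths sum to more than $2m$; hence at least one of them is strictly the longer arc of its edge and can never be chosen, so every admissible choice of shorter arcs (using the freedom in the tied case) is automatically laminar. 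With that observation recorded, your argument is complete and, I would say, cleaner than the published one.
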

\begin{proof} Note that $G^{\delta(A)}=G^{\delta(E(Q) \Delta \{e_1,\cdots,e_n\})}=(G^{\delta(E(Q))})^{\delta(\{e_1,\cdots,e_n\})}$. By Lemma \ref{l 6}, $G^{\delta(E(Q))}$ is a bouquet. Then the sufficiency follows from Lemma \ref{31}.

Now we prove the necessity. If $G^{\delta(A)}$ is a bouquet, by Lemma \ref{l 6}, we take $E(Q)=A$ and $S=\emptyset$. If $G^{\delta(A)}$ is not a bouquet and $k$-regular, then $2m=k|V(G^{\delta(A)})|$ and $n=\frac{2m}{k}-1\geq 1$.
Let $Q_0$ be a spanning tree of $G$ and suppose that $|A \Delta E(Q_0)|=s$. Note that $G^{\delta(E(Q_0))}$ is a bouquet. Since $G^{\delta(A)}=G^{\delta(E(Q_0))\delta(A\Delta E(Q_0))}$, by Lemma \ref{32}, we have $s\geq n$. Let $C=e_{k_1}e_{k_2}\cdots e_{k_{2m}}$, $e_{k_j}\in E(G)$ for $j=1,2,\cdots,2m$, be the cyclic marking arrow sequence in the circle of the arrow presentation of $G^{\delta(E(Q_0))}$.

\noindent{\bf Case 1.}  If $s=n$, we take $Q=Q_0$. For any edge $e\in A \Delta E(Q_0)$, $e$ must be an orientable loop of $G^{\delta(E(Q_0))}$. Otherwise, $G^{\delta(E(Q_0))\delta(e)}$ will also be a bouquet.
But $G^{\delta(A)}=(G^{\delta(E(Q_0)\Delta e)})^{\delta(A\Delta E(Q_0)\Delta e)}$ is $k$-regular and $|A\Delta E(Q_0)\Delta e)|=s-1 <  n$, contradicting Lemma \ref{32}. We next show that no two edges of $A\Delta E(Q_0)$ alternate in the bouquet $G^{\delta(E(Q_0))}$. Otherwise, suppose $e_1, e_2\in A\Delta E(Q_0)$ alternate in $G^{\delta(E(Q_0))}$, then $(G^{\delta(E(Q_0))})^{\delta(\{e_1,e_2\})}$ is also a bouquet. Similarly,
$G^{\delta(A)}=(G^{\delta(E(Q_0)\Delta\{e_1,e_2\})})^{\delta(A\Delta E(Q_0)\Delta\{e_1,e_2\})}$ is $k$-regular,
but $$|A\Delta E(Q_0)\Delta\{e_1,e_2\}|=s-2<n,$$ contradicting Lemma \ref{32}. Let $C_i$, $i=1,2,\cdots,n$, be the shorter marking arrow sequences corresponding to edges in $A \Delta E(Q_0)$. Then $S=\{C_1, \cdots, C_n\}$ is a shorter marking arrow sequence set of $G^{\delta(E(Q_0))}$ and $E(Q_0)\Delta (A\Delta E(Q_0))$ $=A$.

\noindent{\bf Case 2.} If $s> n$, we set $E_1=\emptyset$ if each edge in $A\Delta E(Q_0)$ is an orientable loop of $G^{\delta(E(Q_0))}$.
Otherwise, take a non-orientable loop $e_{p_1}\in A\Delta E(Q_0)$ of $G^{\delta(E(Q_0))}$, we then consider the edges in $A\Delta E(Q_0)\Delta{e_{p_1}}$ of $G^{\delta(E(Q_0))\delta(e_{p_1})}$, take a non-orientable loop $e_{p_2}$. Continuing the above process until we obtain a set $E_1=\{e_{p_1},\cdots,e_{p_{l}}\}\subseteq A\Delta E(Q_0)$ such that $e_{p_i}$ is a non-orientable loop of $G^{\delta(E(Q_0))\delta(\{e_{p_1},\cdots,e_{p_{i-1}}\})}$, $1\leq i\leq l$, and each edge in $A\Delta E(Q_0)\Delta E_1$ is an orientable loop of $G^{\delta(E(Q_0)\Delta E_1)}$. Then clearly $G^{\delta(E(Q_0)\Delta E_1)}$ is a bouquet.

We set $E_2=\emptyset$ if any two edges in $A\Delta E(Q_0)\Delta E_1$ are non-alternate in $G^{\delta(E(Q_0)\Delta E_1)}$.
Otherwise, take two alternate edges $e_{q_1}, e_{q_2}\in A\Delta E(Q_0)\Delta E_1$ of $G^{\delta(E(Q_0)\Delta E_1)}$, then consider edges in $A\Delta E(Q_0)\Delta E_1\Delta\{e_{q_1}, e_{q_2}\}$ of    the ribbon graph $G^{\delta(E(Q_0)\Delta E_1\Delta\{e_{q_1}, e_{q_2}\})}$ and take two alternate edges $e_{q_3}, e_{q_4}$. Continuing this process until we obtain $E_2=\{e_{q_1},\cdots,e_{q_{2j}}\}\subseteq A\Delta E(Q_0)\Delta E_1$ such that $e_{q_i}$ and $e_{q_{i+1}}$ are alternate in $G^{\delta(E(Q_0)\Delta E_1\Delta \{e_{q_1},\cdots,e_{q_{i-1}}\})}$, $i=1,3,5,\cdots, 2j-1$, and no two edges in $A\Delta E(Q_0)\Delta E_1\Delta E_2$ are alternate in $G^{\delta(E(Q_0)\Delta E_1\Delta E_2)}$.
Then $G^{\delta(E(Q_0)\Delta E_1\Delta E_2)}$ is still a bouquet.

Recall that $G^{\delta(E(Q_0)\Delta E_1)}[A\Delta E(Q_0)\Delta E_1]$ is an orientable ribbon graph,  by Lemma \ref{l 4}, $G^{\delta(E(Q_0)\Delta E_1\Delta (E_2))}[A\Delta E(Q_0)\Delta E_1]$ is also orientable.
Thus, each edge in $A\Delta E(Q_0)\Delta E_1\Delta E_2$ ($E_2\subseteq A\Delta E(Q_0)\Delta E_1$) is an orientable loop of $G^{\delta(E(Q_0)\Delta E_1\Delta E_2)}$.

Take $E(Q)=E(Q_0)\Delta E_1\Delta E_2$, then $Q$ is a spanning quasi-tree of $G$ by Lemma \ref{l 6}. Note that $A=(E(Q_0)\Delta E_1\Delta E_2)\Delta (A\Delta E(Q_0)\Delta E_1\Delta E_2)$. Let $S$ be the set of shorter marking arrow sequences corresponding edges in $A\Delta E(Q_0)\Delta E_1\Delta E_2$. Then $S$ is a shorter marking arrow sequence set of $G^{\delta(E(Q))}$ whose each shorter marking arrow sequence has length $k$. Furthermore,
\begin{eqnarray*}
& &|A\Delta E(Q_0)\Delta E_1\Delta E_2 |\\
&=&|V((G^{\delta(E(Q_0)\Delta E_1\Delta E_2)})^{\delta(A\Delta E(Q_0)\Delta E_1\Delta E_2)})|-1\\
&=&|V(G^{\delta(A)})|-1\\
&=&\frac{2m}{k}-1\\
&=&n.
\end{eqnarray*}
This completes the proof of Theorem \ref{main1}.
\end{proof}

\section{The checkerboard colourable partial Petrials}
\noindent

It is easy to show that any Eulerian ribbon graph has a checkerboard colourable partial Petrial \cite{YJ11}. In this section we try to characterize all checkerboard colourable partial Petrials for an Eulerian ribbon graph.

\begin{lemma}\label{l 8}
Let $G$ be a ribbon graph and $e\in E(G)$. Then $G$ is checkerboard colourable implies that $G/e$ is checkerboard colourable.
\end{lemma}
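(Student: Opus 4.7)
The plan is to reduce checkerboard colourability of a ribbon graph to bipartiteness of its geometric dual, and then to use the identity $(G/e)^* = G^* - e$. I would first observe that a (connected) ribbon graph $H$ is checkerboard colourable if and only if its geometric dual $H^* := H^{\delta(E(H))}$ is bipartite as an abstract graph (where bipartiteness includes the absence of loops). This is immediate from the definitions: the vertices of $H^*$ are the boundary components of $H$, and each edge $e'$ of $H^*$ joins the two boundary components that contain the two edge line segments of $e'$ (it is a loop exactly when both line segments lie on the same boundary component). Thus a proper vertex $2$-colouring of $H^*$ is precisely a checkerboard $2$-colouring of $H$ in the sense of the paper.

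Next, I would establish the identity $(G/e)^* = G^* - e$. Using $G/e = G^{\delta(e)} - e$, Lemma \ref{l 1}(3) to split the total dual as $G^{\delta(E(G))} = (G^{\delta(e)})^{\delta(E(G) \setminus \{e\})}$, and the standard fact that partial dual and edge deletion commute when the deleted edge lies outside the duality set, I compute
\begin{align*}
(G/e)^*
&= (G^{\delta(e)} - e)^{\delta(E(G) \setminus \{e\})} \\
&= (G^{\delta(e)})^{\delta(E(G) \setminus \{e\})} - e \\
&= G^{\delta(E(G))} - e \\
&= G^* - e.
\end{align*}

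Finally, since $G$ is checkerboard colourable, $G^*$ is bipartite; edge deletion cannot create loops or odd cycles, so $G^* - e = (G/e)^*$ is bipartite too, whence $G/e$ is checkerboard colourable. I do not anticipate any serious obstacle. The only mildly technical ingredient is the commutation of partial dual and edge deletion on disjoint edge sets, which is a standard local property of ribbon graphs (see \cite{EM13}); once this is granted, the proof is purely formal, and the main insight is the translation of the combinatorial checkerboard condition into structural bipartiteness of $G^*$, after which the classical contraction–deletion duality between $G$ and $G^*$ immediately yields the result.
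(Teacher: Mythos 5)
Your proof is correct, but it takes a genuinely different route from the paper's. The paper argues locally and constructively: contracting $e$ merges the vertex line segments adjacent to each edge line segment of $e$ into a single vertex line segment of $G/e$, and since a checkerboard colouring of $G$ gives $v_1'$, $e'$, $v_2'$ one colour and $v_1''$, $e''$, $v_2''$ the other, the colouring descends directly to $G/e$. You instead translate checkerboard colourability of $H$ into bipartiteness (loops excluded) of the geometric dual $H^{\delta(E(H))}$ --- which is indeed immediate from the bijection between vertices of $H^{\delta(E(H))}$ and boundary components of $H$ (Lemma \ref{l 6} and the remark preceding it) --- and then invoke the contraction--deletion duality $(G/e)^{\delta(E(G)\backslash\{e\})}=G^{\delta(E(G))}-e$, whose only nontrivial ingredient is the commutation of deletion with partial duality on disjoint edge sets, a standard fact from \cite{EM13}. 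Both arguments are sound; note only that $G/e$ may be disconnected even when $G$ is connected (e.g.\ contracting a trivial orientable loop), but your dual--bipartite equivalence holds componentwise, so this is harmless. What your approach buys is brevity and a structural explanation (bipartiteness is closed under taking subgraphs); what the paper's explicit colour-transfer buys is reusability: the same local picture is run in reverse in the proof of Lemma \ref{l 9}, so the constructive version is the one the rest of Section~4 actually needs.
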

\begin{proof}
Let $e'$ and $e''$ be the two edge line segments of the edge $e$ of $G$. Then $e'$ (resp. $e''$) will connect two incident (not necessarily distinct) vertex line segments $v_1'$ and $v_2'$ (resp. $v_1''$ and $v_2''$) of $G$ to form a vertex line segment $v'=v_1'e'v_2'$ (resp. $v''=v_1''e''v_2''$) (called the corresponding vertex line segment in the following) of $G/e$.  If $G$ has been checkerboard colored, then $v_1'$, $e'$ and $v_2'$ (resp. $v_1''$, $e''$ and $v_2''$) receive the same color (but $v'$ and $v''$ receive different colors), naturally forming a checkerboard coloring of $G/e$. See Figure \ref{f 6}.
\begin{figure}[htbp]
\centering
\includegraphics[width=2.5in]{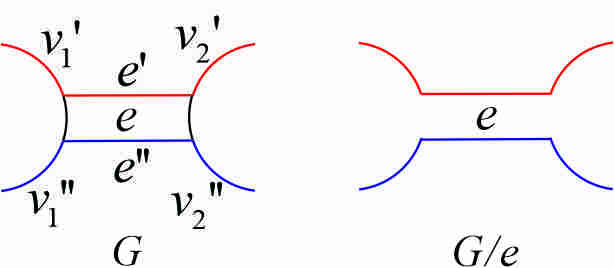}
\caption{The checkerboard colouring of $G/e$ inheriting from a checkerboard colouring of $G$.}
\label{f 6}
\end{figure}
\end{proof}

\begin{lemma}\label{l 9}
Let $G$ be an Eulerian ribbon graph and $F$ be a forest of $G$. Then $G/E(F)$ is checkerboard colourable implies that $G$ is checkerboard colourable.
\end{lemma}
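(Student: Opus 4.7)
The plan is induction on $|E(F)|$; the case $|E(F)|=0$ is immediate since $G=G/E(F)$. For the inductive step, pick any $e\in E(F)$. Because $F$ is a forest, $e$ is not a loop of $G$, so its endpoints $v_1,v_2$ are distinct and (by the Eulerian hypothesis) both have even degree. The contraction $G/e$ is again Eulerian because the merged vertex has degree $\deg v_1+\deg v_2-2$, which is even. Moreover, $E(F)\setminus\{e\}$ is a forest of $G/e$ with $(G/e)/(E(F)\setminus\{e\})=G/E(F)$ checkerboard colourable by hypothesis, so by the induction hypothesis $G/e$ is checkerboard colourable. The problem thus reduces to the single-edge statement: if $G$ is Eulerian, $e$ is a non-loop edge of $G$, and $G/e$ is checkerboard colourable, then so is $G$.

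For this single-edge step, the key observation is that contracting a non-loop edge leaves the underlying surface unchanged and hence provides a natural bijection between the boundary components of $G$ and those of $G/e$. Given a checkerboard colouring of $G/e$, I would transport it across this bijection to $2$-colour the boundary components of $G$. For every edge $f\neq e$, the two edge line segments of $f$ lie in the same pair of boundary components in both graphs, so they automatically receive distinct colours. The only thing to check is that the two edge line segments $e',e''$ of $e$ itself receive different colours. Using the notation of Lemma \ref{l 8}, in $G/e$ the four vertex line segments $v_1',v_2',v_1'',v_2''$ adjacent to the two common line segments of $e$ get concatenated into two merged segments $v'$ and $v''$ of the new vertex $v$, and these lie on the boundary components of $G/e$ that correspond precisely to the boundary components of $G$ containing $e'$ and $e''$.

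The heart of the argument is then to show that in any checkerboard colouring of $G/e$ the segments $v'$ and $v''$ receive opposite colours, for then $e'$ and $e''$ inherit opposite colours in $G$. This rests on the standard observation that at any vertex of a checkerboard coloured ribbon graph the vertex line segments alternate in colour around the vertex (two consecutive vertex line segments are separated by one common line segment, and share colours with the two necessarily opposite-coloured edge line segments of that edge). At the merged vertex $v$, the segments $v'$ and $v''$ are separated on one side by the $\deg v_1-1$ other vertex line segments of $v_1$ and on the other side by the $\deg v_2-1$ other vertex line segments of $v_2$; the Eulerian hypothesis makes both counts odd, which forces $v'$ and $v''$ into opposite colour classes. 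The main obstacle, and the exact place where the Eulerian hypothesis becomes unavoidable, is precisely this parity step: without even degrees at $v_1,v_2$, the segments $v'$ and $v''$ could share a colour in $G/e$, and the lift would give $e'$ and $e''$ the same colour, destroying checkerboardness.
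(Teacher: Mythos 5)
Your proof is correct, and while it rests on the same parity obstruction as the paper's, it is organized quite differently. The paper lifts a checkerboard colouring of $G/E(F)$ across the whole forest at once: the lift succeeds exactly when, for each $e\in E(F)$, the two merged vertex line segments $v'=v_1'e'v_2'$ and $v''=v_1''e''v_2''$ receive different colours, and the bad case is excluded by contradiction, passing through the partial dual $G^{\delta(E(F)\backslash e_k)}$ to manufacture two odd-degree vertices in $G$. You instead induct on $|E(F)|$, reduce to a single non-loop edge, and verify the same condition by a purely local count at the merged vertex, using the alternation of vertex line segment colours around a vertex (which the paper records in its preliminaries). This avoids partial duality altogether and makes the role of the Eulerian hypothesis more transparent, at the price of an extra (routine) check that $G/e$ is again Eulerian so the induction can proceed. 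One small slip: the number of vertex line segments of $v_1$ lying strictly between $v'$ and $v''$ is $\deg v_1-2$, not $\deg v_1-1$, and under alternation it is an \emph{even} number of intervening segments that forces opposite colours; since $\deg v_1$ and $\deg v_2$ are even, $\deg v_i-2$ is even and your conclusion that $v'$ and $v''$ lie in opposite colour classes is right, but the stated count and the stated parity criterion are each off by one in compensating ways. With that corrected, the argument is complete and is a clean, more elementary alternative to the paper's proof.
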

\begin{proof}
Suppose that $G/E(F)$ has been checkerboard coloured.
If two corresponding vertex line segments of each edge of $F$ in $G/E(F)$ are coloured with different colours, then by reversing the process in the proof of Lemma \ref{l 8}, we obtain a checkerboard coloring of $G$.

Suppose $E(F)=\{e_1,\cdots,e_n\}$. Since $F$ is a forest, each connected component of $G[F]$ forms a vertex of $G/E(F)$. In particular two corresponding vertex line segments of each edge in $F$ belong to the same vertex boundary of $G/E(F)$.

If there exists an edge $e_k\in E(F)$, $1\leq k\leq n$, such that the corresponding vertex line segments of $e_k$ are coloured with same colour, implying that there are even number of half-edges between these two vertex line segments in $G/E(F)$,
then, in $G/(E(F)\backslash e_k)$, both the degree of the two ends of the edge $e_k$, denoted by $u_1$ and $u_2$, will be odd. Suppose $u'_1$ and $u'_2$ are the corresponding vertices of $u_1$ and $u_2$ in $G^{\delta(E(F)\backslash e_k)}$. Clearly, both the degree of $u'_1$ and $u'_2$ are still odd and the edges of $E(F)\backslash e_k$ are orientable and pairwise non-alternate at $u'_1$ and $u'_2$.

Note that $G$ can be obtained form $G^{\delta(E(F)\backslash e_k)}$ by taking partial dual with respect to $E(F)\backslash e_k$. Since $u'_1$ and $u'_2$ are odd vertices in $G^{\delta(E(F)\backslash e_k)}$, there must exist two odd vertices in $G$, contradicting that $G$ is Eulerian.
\end{proof}

\begin{remark}

In Lemma \ref{l 9}, the condition that $G$ is an Eulerian is necessary. For example, let $G$ be an orientable ribbon graph which is a $3$-cycle with a pendant edge. Then $G/E(T)$ is a bouquet with an orientable loop for any spanning tree $T$ of $G$ which is checkerboard colourable, but $G$ is not checkerboard colorable.

In particular, Lemma \ref{l 9} holds when $F$ is a spanning tree of $G$. However, it is possibly wrong when $F$ is spanning quasi-tree of $G$. An example is given in Figure \ref{f 9}, where $\{e_1,e_3\}$ forms a spanning quasi-tree of $G$, $G/\{e_1,e_3\}$ is checkerboard colourable, but $G$ is not checkerboard colorable.

\begin{figure}[htbp]
\centering
\includegraphics[width=5.0in]{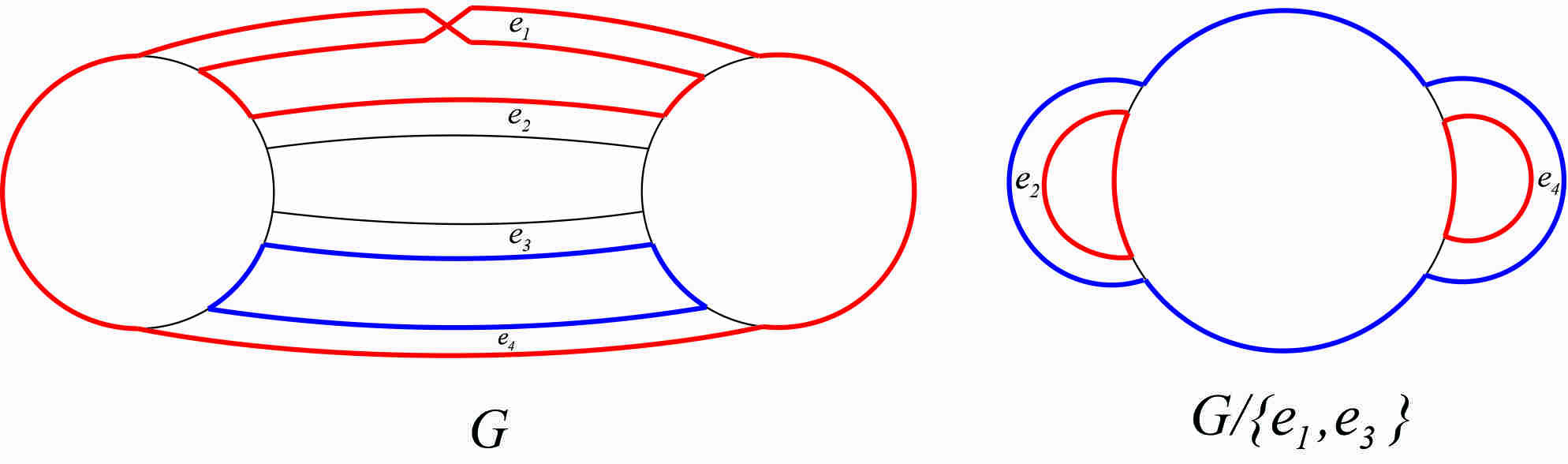}
\caption{$\{e_1,e_3\}$ forms a spanning quasi-tree but not a spanning tree.}
\label{f 9}
\end{figure}

\end{remark}
Combining Lemmas \ref{l 8} and \ref{l 9}, we obtain:

\begin{theorem}\label{th 5}
An Eulerian ribbon graph $G$ is checkerboard colourable if and only if $G/E(F)$ is checkerboard colourable for any forest $F$ of $G$.
\end{theorem}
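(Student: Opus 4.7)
My plan is to derive the theorem directly from Lemmas~\ref{l 8} and~\ref{l 9}: the forward direction by iterating Lemma~\ref{l 8} along the edges of a forest, and the backward direction by a single application of Lemma~\ref{l 9} to any chosen forest (using the Eulerian hypothesis).

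For the forward direction, I assume $G$ is checkerboard colourable and let $F$ be an arbitrary forest of $G$ with edges enumerated as $e_1,\dots,e_n$. I induct on $n$. The base case $n=0$ is immediate since $G/E(F)=G$ is checkerboard colourable by assumption. For the inductive step, the inductive hypothesis gives a checkerboard colouring of $G/\{e_1,\dots,e_{n-1}\}$, and one application of Lemma~\ref{l 8} to the edge $e_n$ of this graph yields that
\[
G/E(F) \;=\; \bigl(G/\{e_1,\dots,e_{n-1}\}\bigr)/e_n
\]
is checkerboard colourable. Since $F$ was arbitrary, the conclusion holds for every forest $F$ of $G$.

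For the backward direction, I assume that $G/E(F)$ is checkerboard colourable for every forest $F$ of $G$, pick any one such forest $F$, and invoke Lemma~\ref{l 9}: since $G$ is Eulerian, the given checkerboard colouring of $G/E(F)$ pulls back to a checkerboard colouring of $G$. I do not anticipate any serious obstacle, since both lemmas have already done the essential single-step work. The only point worth flagging is that the Eulerian hypothesis is needed only in the backward implication, through Lemma~\ref{l 9}, and the remark preceding the theorem shows this hypothesis cannot be dropped (and also that the word ``forest'' cannot be relaxed to ``spanning quasi-tree'').
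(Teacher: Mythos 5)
Your proof is correct and follows exactly the route the paper intends: the paper gives no separate argument, simply stating that the theorem is obtained by combining Lemmas~\ref{l 8} and~\ref{l 9}, and your write-up just makes the routine edge-by-edge induction for the forward direction explicit. Nothing further is needed.
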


Let $G$ be a ribbon graph. The arrow presentation of $G$ can be viewed as the union of all vertex line segments of $G$ and making arrows of all edges of $G$ with vertex line segments and marking arrows appearing alternatively in each circle of the arrow presentation.
Suppose $e', e''$ are two marking arrows of $e\in E(G)$ in the arrow presentation of $G$. Then $G$ is checkerboard colourable if and only if there exists a coloring of vertex line segments with two colors in its arrow presentation such that
\begin{enumerate}
\item vertex line segments are colored with two colors alternatively in each circle, or equivalently the head and tail (to be exact, the vertex line segments incident with the head and the tail) of each marking arrow receive different colors.
\item the color of the head of $e'$ is same as the tail of $e''$, or equivalently (if the condition (1) holds) the color of the tail of $e'$ is same as the head of $e''$, for each $e\in E(G)$.
\end{enumerate}

It is obvious that if a ribbon graph is checkerboard colorable, then it must be Eulerian by (1).

Let $G$ be a ribbon graph and $T$ be a spanning tree of $G$. Then $G/E(T)$ is a bouquet. If we color vertex line segments of the arrow presentation of $G/E(T)$ alternatively (this can be done), then condition (1) is satisfied. Then the condition (2) will be equivalent to that the number of vertex line segments from the vertex line segment incident with the head of $e'$ to that incident with the tail of $e''$ (briefly, the number of vertex line segments between the head of $e'$ and the tail of $e''$) is odd in $G/E(T)$ for each $e\in E(G)\backslash E(T)$.

\begin{definition}
Let $G$ be a ribbon graph and $T$ be a spanning tree of $G$. For $e\in E(G)\backslash E(T)$, we define the index $t(e,G/E(T))=0$ if the head of $e'$ and the tail of $e''$ are separated by odd number of vertex line segments in the arrow presentation of $G/E(T)$ and $t(e,G/E(T))=1$ otherwise.
\end{definition}

An example is given in Figure \ref{e10} with $t(e_1,G/E(T))=0$ and $t(e_2,G/E(T))=0$.

\begin{figure}[htbp]
\centering
\includegraphics[width=1.5in]{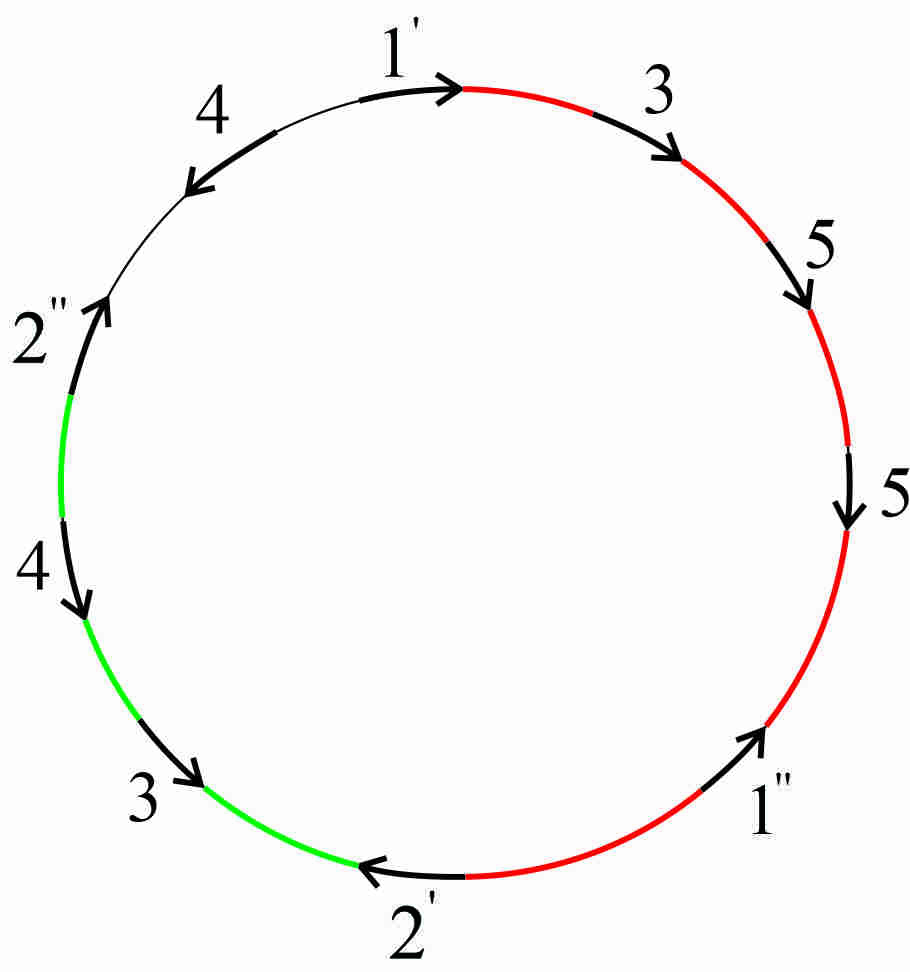}
\caption{The arrow presentation of $G/E(T)$.}
\label{e10}
\end{figure}

Evidently, we have:

\begin{lemma}\label{k11}
Let $G$ be a ribbon graph and $T$ be a spanning tree of $G$. Then $G$ is checkerboard colourable if and only if $t(e,G/E(T))=0$ for each $e\in E(G)\backslash E(T)$.
\end{lemma}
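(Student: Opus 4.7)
The plan is to combine Theorem~\ref{th 5} with the explicit vertex-line-segment criterion for checkerboard colourability discussed in the paragraph preceding the lemma, reducing the question to the bouquet $G/E(T)$. Since $T$ is a spanning tree (a fortiori a forest), and a checkerboard colourable ribbon graph is automatically Eulerian (by the observation following the enumerated conditions), Theorem~\ref{th 5} says that $G$ is checkerboard colourable if and only if $G/E(T)$ is; thus the entire lemma reduces to characterising checkerboard colourability of the bouquet $G/E(T)$ in terms of $t$.

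I would then work on the single circle of the arrow presentation of $G/E(T)$. Because this bouquet has the even number $2(|E(G)|-|E(T)|)$ of vertex line segments, there is an alternating two-colouring of the vertex line segments (essentially unique up to swapping the two colours), and this realises condition~(1) automatically. Consequently $G/E(T)$ is checkerboard colourable if and only if this alternating colouring also satisfies condition~(2), i.e.\ if and only if for every $e \in E(G)\setminus E(T)$ the vertex line segments incident with the head of $e'$ and with the tail of $e''$ share a colour. In the alternating colouring two vertex line segments share a colour precisely when their cyclic positions have the same parity, equivalently when an odd number of vertex line segments separate them along the circle, and this is exactly the defining condition $t(e, G/E(T))=0$.

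Putting the pieces together: for the forward direction, checkerboard colourability of $G$ descends to $G/E(T)$ by iterating Lemma~\ref{l 8}, and then the alternating-colouring analysis above gives $t(e, G/E(T))=0$ for every $e$; for the reverse direction, the hypothesis on $t$ says that the alternating colouring of $G/E(T)$ satisfies both (1) and (2), so $G/E(T)$ is checkerboard colourable, and Lemma~\ref{l 9} (applied with $F = T$) lifts the colouring back to $G$, using the Eulerian assumption that is standing throughout Section~4. The only non-automatic point is the parity bookkeeping in the middle paragraph --- one must confirm that the v.l.s.\ incident with the head of $e'$ and the v.l.s.\ incident with the tail of $e''$ are indeed separated by exactly the quantity used in the definition of $t$ --- but this is mechanical once the orientation conventions of the arrow presentation are pinned down, which is presumably why the authors preface the statement with ``Evidently''.
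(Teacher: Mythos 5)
Your proposal is correct and follows essentially the same route as the paper, which offers no separate proof but derives the lemma (``Evidently'') from exactly the ingredients you use: the reduction $G \leftrightarrow G/E(T)$ via Lemmas~\ref{l 8} and~\ref{l 9} (equivalently Theorem~\ref{th 5}), and the observation that on the single circle of the bouquet $G/E(T)$ the alternating colouring is forced, so condition (2) collapses to the parity condition defining $t$. You are also right to flag that the backward direction genuinely needs $G$ Eulerian (cf.\ the Remark preceding the lemma), a hypothesis the paper leaves implicit in the statement.
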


In order to label positions of the corresponding vertex line segments (i.e. vertex line segments corresponding to edge line segments) of $E(T)$ in the bouquet $G/E(T)$, we add a short vertical line segment to the head of each marking arrow of each edge of $E(T)$ in the arrow presentation of $G^{\delta(E(T))}$ instead of deleting them as in the arrow presentation of $G/E(T)$. We denoted by $\overline{G/E(T)}$ such an arrow presentation with short vertex line segments added to heads of marking arrows of edges of $E(T)$ and we call the marking arrow with head added short vertex line segment \emph{vertical marking arrow}. An example is given in Figure \ref{f 22}.

These vertical marking arrows only indicate the positions and we still consider them as parts of vertex line segments but not the marking arrows, thus $t(e,G/E(T))=t(e,\overline{G/E(T)})$ for each $e\in E(G)\backslash E(T)$.
When $e\in E(T)$, suppose $\overline{e}'$ and $\overline{e}''$ are the vertical marking arrows of $e$.
There are two marking arrow sequences between $\overline{e}'$ and $\overline{e}''$ in $\overline{G/E(T)}$, then we randomly choose one of them and denote by $P_e$.

\begin{definition}
As notations above, for each $e\in E(T)$, let $E_e=\{e\}\cup\{f\in E(G)\backslash E(T)~| $ only one marking arrow of $f$ is included in $P_e \}$, we call $E_e$ the adjoint set of $e$.
\end{definition}

\begin{lemma}\label{l 10}
Let $G$ be a ribbon graph, $T$ be a spanning tree of $G$, $e\in E(T)$ and $E_e$ be the adjoint set of $e$. Then $G$ is checkerboard colourable implies that $G^{\tau(E_e)}$ is checkerboard colourable.
\end{lemma}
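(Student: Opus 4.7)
The plan is to translate checkerboard colourability into a $\mathbb{Z}/2$-linear system indexed by vertices, and to show that passing from $G$ to $G^{\tau(E_e)}$ preserves solvability.

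For each vertex $v\in V(G)$, fix a reference vertex line segment; the alternation condition (1) reduces the colouring on the circle of $v$ to a single parameter $\phi(v)\in\mathbb{Z}/2$. For every edge $f=uv$ with arrows $f_1$ on $u$ and $f_2$ on $v$, condition (2) rewrites as $\phi(u)+\phi(v)\equiv\beta_f\pmod 2$, where $\beta_f\in\mathbb{Z}/2$ is determined by the positions and directions of $f_1, f_2$ relative to the references. Thus $G$ is checkerboard colourable iff $\sum_{f\in C}\beta_f\equiv 0\pmod 2$ for every cycle $C$ of $G$, and since the cycle space is spanned by the fundamental cycles with respect to $T$, iff this holds on each fundamental cycle $C_f$ of a non-tree edge $f\in E(G)\setminus E(T)$.

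Applying $\tau$ to a single edge $h$ reverses one of its arrows and hence toggles $\beta_h$ by $1\pmod 2$; so for $G^{\tau(E_e)}$ the system has constants $\beta_f+\chi_{E_e}(f)\pmod 2$. Its solvability reduces to the requirement $|C_f\cap E_e|\equiv 0\pmod 2$ for every non-tree edge $f$. As $E_e\cap E(T)=\{e\}$ and the unique non-tree edge in $C_f$ is $f$ itself, $|C_f\cap E_e|=[f\in E_e]+[e\in C_f]$, and the whole problem collapses to the equivalence
\[
(\ast)\qquad f\in E_e \iff e\in C_f.
\]

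The hard part is $(\ast)$. I would realise the single circle of the bouquet $G^{\delta(E(T))}$ as the boundary of a regular neighbourhood of $T$ in the surface of $G$; splitting $T$ at $e$ yields two subtrees $T_1, T_2$, and the two marking arrows of $e$ on this circle are the two traces of the sides of the ribbon of $e$, partitioning the circle into two arcs, each bounding a regular neighbourhood of one $T_i$. A non-tree edge $f=uv$ contributes one marking arrow per endpoint, and that arrow lies in the arc belonging to the component of $T-e$ containing the endpoint. Therefore exactly one marking arrow of $f$ lies in $P_e$ iff $u$ and $v$ are in different components of $T-e$, iff $e$ lies on the $uv$-path in $T$, iff $e\in C_f$, which is $(\ast)$. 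A more combinatorial route is induction on $|E(T)|$: contracting a tree edge $e_1\neq e$ does not move the marking arrows of non-tree edges on the bouquet, and the identity $(G/e_1)^{\delta(E(T)\setminus\{e_1\})}=G^{\delta(E(T))}-e_1$ transfers the equivalence to the smaller tree $T-e_1$, reducing to the base case $T=\{e\}$, which is immediate. With $(\ast)$ in hand, the solvability of the $G$-system transfers to that of $G^{\tau(E_e)}$, so $G^{\tau(E_e)}$ is checkerboard colourable.
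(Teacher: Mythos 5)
Your proposal is correct, but it takes a genuinely different route from the paper's. The paper contracts the spanning tree to a bouquet, introduces the parity index $t(f,G/E(T))$, and argues by a case analysis that applying $\tau(e)$ reflects the arc $P_e$ and thereby flips the index of precisely those non-tree edges with exactly one marking arrow in $P_e$; applying $\tau$ to those edges as well restores the index to $0$ everywhere, and Theorem \ref{th 5} lifts the conclusion from $G^{\tau(E_e)}/E(T)$ back to $G^{\tau(E_e)}$. You instead encode checkerboard colourability as a $\mathbb{Z}/2$-linear system on the vertices of $G$ itself, note that $\tau$ on an edge toggles its constant, and reduce the whole lemma to the identity $f\in E_e\iff e\in C_f$, i.e.\ to the statement that the adjoint set $E_e$ is exactly the fundamental cocycle (bond) of $e$ with respect to $T$, so that $|C_f\cap E_e|$ is always even. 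That identification is the real content of your argument, and your regular-neighbourhood justification (the two marking arrows of $e$ cut the boundary circle of a neighbourhood of $T$ into the boundary arcs of the two components of $T-e$, and a marking arrow of $f$ lands in the arc of the component containing its endpoint) is sound -- it is essentially the same geometric observation that drives the paper's case analysis, packaged once and for all. Your version buys a cleaner structural statement (adjoint sets are fundamental cocycles, which also explains why symmetric differences $E_{e_1}\Delta\cdots\Delta E_{e_s}$ appear in Theorem \ref{th 2}); the paper's version buys economy, running entirely on machinery it has already built (Lemmas \ref{l 8}, \ref{k11} and Theorem \ref{th 5}). Two points you should make explicit in a final write-up: the single-parameter reduction $\phi(v)$ requires every vertex degree to be even, which holds because $G$ is assumed checkerboard colourable and hence Eulerian (and is preserved by $\tau$); and the loop and parallel-edge cases of $(\ast)$, as well as non-tree edges attached at an endpoint of $e$, should be checked, though all come out consistently.
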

\begin{proof}
Since $G$ is checkerboard colourable, the ribbon graph $G/E(T)$ is checkerboard colourable by Lemma \ref{l 8}. Then, by Lemma \ref{k11}, $t(f,G/E(T))=0$ for $f\in E(G)\backslash E(T)$. There are two cases. See Figure \ref{f 22}.

\begin{enumerate}
\item If both or neither the two marking arrows of $f\in E(G)\backslash E(T)$ are contained in $P_e$, then the parity of the number of marking arrows between the head of $f'$ and the tail of $f''$ in $\overline{G/E(T)}$ is same as that in $\overline{G^{\tau(e)}/E(T)}$. Thus, $t(f,G^{\tau(e)}/E(T))=t(f,\overline{G^{\tau(e)}/E(T)})=t(f,\overline{G/E(T)})$ $=t(f,G/E(T))=0$.
\item As in the proof of Lemma \ref{l 9}, the two vertex line segments including $\overline{e}'$ and $\overline{e}''$ are coloured with two different colours. Thus the number of the vertex line segments between $\overline{e}'$ and $\overline{e}''$ are even. If only one marking arrow of $f\in E(G)\backslash E(T)$ is contained in $P_e$,
then the sum of the number of vertex line segments between the head of $f'$ and the tail of $f''$ in
 $\overline{G/E(T)}$ and the number of vertex line segments between the head of $f'$ and the head $f''$ in $\overline{G^{\tau(e)}/E(T)}$ is even.
Therefore, the number of vertex line segments between the head of $f'$ and the tail of $f''$ in $\overline{G/E(T)}$ is odd if and only if the number of vertex line segments between the head of $f'$ and the tail of $f''$ in $\overline{G^{\tau(e)}/E(T)}$ is even.
Due to $t(f,\overline{G/E(T)})=t(f,G/E(T))=0$, we have $t(f,G^{\tau(e)}/E(T))=t(f,\overline{G^{\tau(e)}/E(T)})=1$.
\end{enumerate}

\begin{figure}[htbp]
\centering
\includegraphics[width=3.5in]{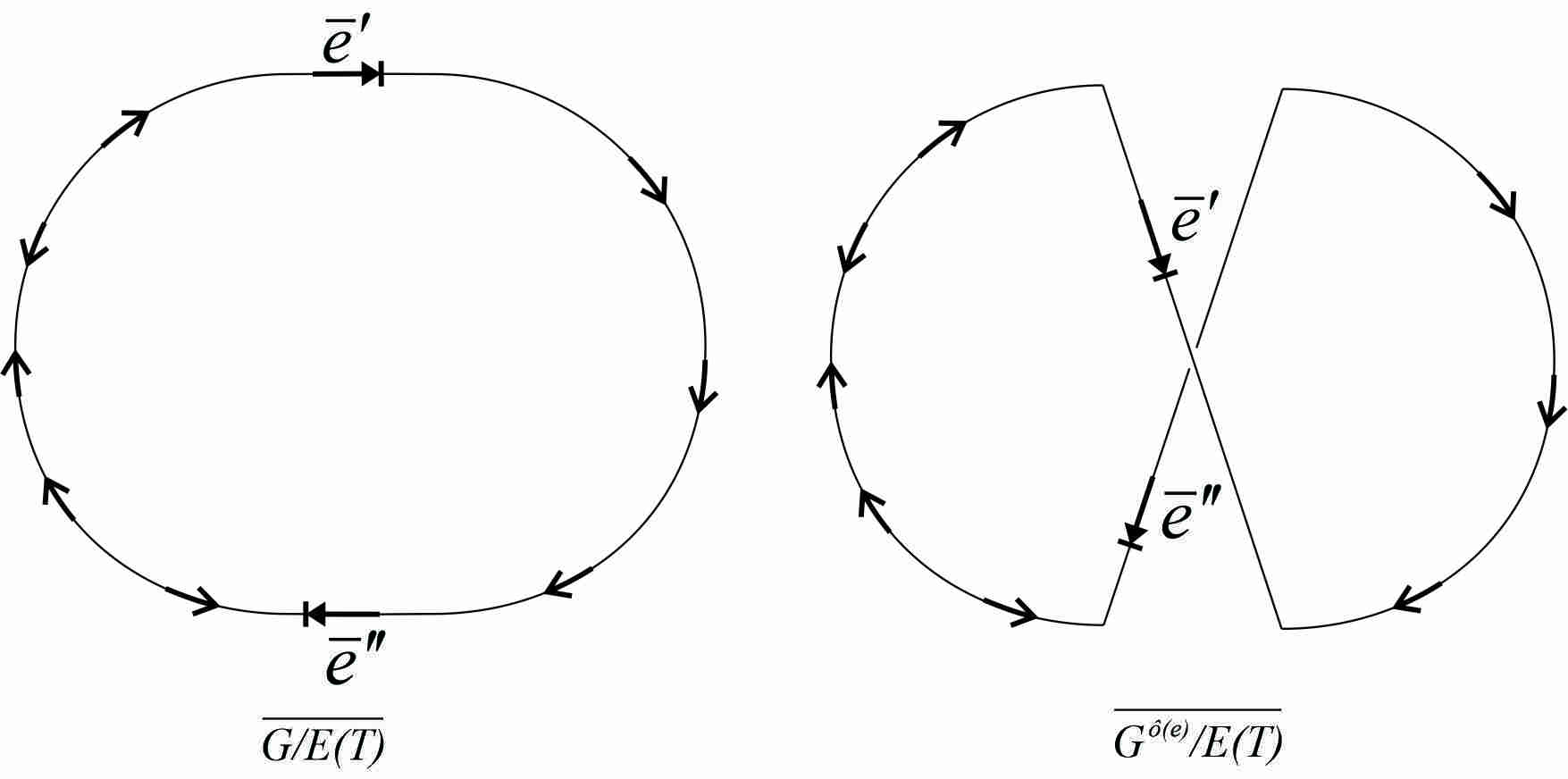}
\caption{$\overline{G/E(T)}$ and $\overline{G^{\tau(e)}/E(T)}$, $e\in E(T)$.}
\label{f 22}
\end{figure}

Thus by Lemma \ref{k11} we have $(G^{\tau(e)}/E(T))^{\tau(E_e\backslash e)}$ is checkerboard colourable.
Clearly, $G^{\tau(f)}/E(T)=(G/E(T))^{\tau(f)}$ for any $f\in E(G)\backslash E(T)$.
Therefore, $G^{\tau(E_e)}/E(T)=(G^{\tau(e)}/E(T))^{\tau(E_e\backslash e)}$ is checkerboard colourable.

When $G$ is checkerboard colourable, $G$ is an Eulerian ribbon graph.
Note that $T$ is also a spanning tree of $G^{\tau(E_e)}$. By Theorem \ref{th 5}, $G^{\tau(E_e)}$ is checkerboard colourable.
\end{proof}

\begin{theorem}\label{th 2}
Let $G$ be an Eulerian ribbon graph and $T$ be a spanning tree of $G$. Then
$G^{\tau(A)}$ is checkerboard colourable for $A\subseteq E(G)$ if and only if $A\cap E(T)=\{e_1,\cdots,e_s\}$ and
$$A=A_0\Delta E_{e_1}\Delta \cdots\Delta E_{e_s},$$
where $A_0=\{e\in E(G)\backslash E(T)~| ~t(e,G/E(T))=1\}$ and $E_{e_i}$ is the adjoint set of $e_i$, $1\leq i\leq s$.
\end{theorem}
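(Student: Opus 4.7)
My plan is to reduce checkerboard colourability of $G^{\tau(A)}$ to a linear condition over $\mathbb{F}_2$ via Lemma \ref{k11}, and then read off the required form of $A$. The key intermediate claim is the cumulative formula
\[
t(f, G^{\tau(A)}/E(T)) \equiv t(f, G/E(T)) + [f \in A\backslash E(T)] + \sum_{e \in A\cap E(T)} [f \in E_e \backslash \{e\}] \pmod 2,
\]
valid for every $A\subseteq E(G)$ and every $f\in E(G)\backslash E(T)$. The term $[f\in A\backslash E(T)]$ reflects that $\tau$ on non-tree edges commutes with the contraction $/E(T)$ and merely flips one marking arrow of $f$ in the bouquet $G/E(T)$, while the sum encodes Lemma \ref{l 10}'s case analysis, contributed edge-by-edge by the tree edges in $A$.

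Granting the formula, the theorem follows in a single step. By Lemma \ref{k11}, $G^{\tau(A)}$ is checkerboard colourable iff the right-hand side above vanishes for every $f\notin E(T)$. Writing $\{e_1,\ldots,e_s\}:=A\cap E(T)$ and using $[f\in A_0]\equiv t(f,G/E(T))\pmod 2$ from the definition of $A_0$, this rearranges to
\[
A\backslash E(T) \;=\; A_0\,\Delta\,(E_{e_1}\backslash\{e_1\})\,\Delta\,\cdots\,\Delta\,(E_{e_s}\backslash\{e_s\}).
\]
Combined with $A\cap E(T)=\{e_1,\ldots,e_s\}$, $A_0\cap E(T)=\emptyset$, and $E_{e_i}\cap E(T)=\{e_i\}$, this is equivalent to $A = A_0\,\Delta\,E_{e_1}\,\Delta\,\cdots\,\Delta\,E_{e_s}$, which proves both implications simultaneously.

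The main obstacle is establishing the cumulative formula once $A\cap E(T)$ contains more than one edge. A naive inductive application of Lemma \ref{l 10} is problematic: after applying $\tau(e_1)$ for some $e_1\in E(T)$, the cyclic marking-arrow sequence of $G^{\tau(e_1)}/E(T)$ differs from that of $G/E(T)$, so the adjoint set of a second tree edge $e_2$ computed inside $G^{\tau(e_1)}$ need not coincide with $E_{e_2}$. My plan is to bypass the induction entirely by fixing $f\in E(G)\backslash E(T)$ and carrying out a single parity count directly on the original arrow presentation $\overline{G/E(T)}$: replicate Cases (1) and (2) of Lemma \ref{l 10}'s proof with respect to $P_e$ simultaneously for every $e\in A\cap E(T)$, so that each $\tau(e)$ flips the parity of the number of marking arrows separating the two marking arrows of $f$ precisely when $P_e$ contains exactly one of them. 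These per-edge parity flips, together with the separate flip from each $e\in A\backslash E(T)$, combine additively in $\mathbb{F}_2$ and produce the formula.
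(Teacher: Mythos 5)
Your reduction of the theorem to the cumulative parity formula is correct and, granting that formula, does deliver both implications in one stroke; this is organised quite differently from the paper, whose necessity proof is an induction on $|A\cap E(T)|$ that applies Lemma \ref{l 10} to the already checkerboard colourable graph $G^{\tau(A)}$ to strip off one adjoint set at a time. The problem is that the formula itself --- which you correctly identify as carrying the whole content of the argument --- is not proved, and the sketch you offer does not close the two gaps it contains. First, the claim that $\tau(e)$, $e\in E(T)$, flips $t(f,\cdot)$ exactly when $P_e$ contains one marking arrow of $f$ is Case (2) of Lemma \ref{l 10}, and in the paper that case is established only under the hypothesis that $G$ is checkerboard colourable: the flip occurs because the number of vertex line segments between $\overline{e}'$ and $\overline{e}''$ is even, which the paper reads off from the two-colouring. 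Your formula is asserted for arbitrary $A$ with no colourability assumption, so you must re-derive this evenness from the Eulerian hypothesis alone (this is true, and is in effect the contrapositive buried in the proof of Lemma \ref{l 9}, but it has to be stated and used --- it is also exactly where the Eulerian hypothesis of the theorem enters).

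Second, and more seriously, the additivity of the per-edge flips is precisely the interaction problem you flag, and saying you will run the case analysis ``simultaneously for every $e\in A\cap E(T)$'' does not dispose of it. The presentation $\overline{G^{\tau(A)}/E(T)}$ is obtained from $\overline{G/E(T)}$ by composing the reflections of the arcs $P_{e_1},\dots,P_{e_s}$, and to conclude that the composite effect on $t(f,\cdot)$ is the sum of the individual effects you need that these reflections do not disturb one another: reflecting $P_{e_i}$ must change neither the gap parity of $P_{e_j}$ nor whether $P_{e_j}$ contains exactly one marking arrow of $f$. This holds because the two vertical marking arrows of distinct tree edges never interleave on the circle (a spanning tree dualizes to a planar bouquet, so the edges of $E(T)$ are pairwise non-alternating trivial loops in $G^{\delta(E(T))}$), but nothing in your proposal supplies this fact, and without it ``the per-edge parity flips combine additively'' is an assertion, not a computation. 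Note that the paper's induction sidesteps both difficulties by only ever invoking Lemma \ref{l 10} on a checkerboard colourable graph, at the cost of tacitly assuming that the adjoint set of $e_s$ is the same whether computed in $G$ or in $G^{\tau(A)}$ --- an invariance which is essentially the same non-interleaving fact you would need anyway.
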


\begin{proof}
$(\Longleftarrow)$
Clearly, by Lemma \ref{k11} the ribbon graph $(G/E(T))^{\tau(A_0)}$ is checkerboard colourable.
Since $(G/E(T))^{\tau(A_0)}=G^{\tau(A_0)}/E(T)$, $G^{\tau(A_0 )}$ is checkerboard
colourable by Theorem \ref{th 5}.
Then, by Lemma \ref{l 10}, $G^{\tau(A)}$ is checkerboard colourable.

$(\Longrightarrow)$ We prove the necessity by induction on $|A\cap E(T)|$. When $|A\cap E(T)|=0$, since $G^{\tau(A)}/E(T)=(G/E(T))^{\tau(A)}$, the following statements are all equivalent:
\begin{enumerate}
\item $G^{\tau(A)}$ is checkerboard colourable;
\item $G^{\tau(A)}/E(T)$ is checkerboard colourable;
\item ${(G/E(T))}^{\tau(A)}$ is checkerboard colourable;
\item $t(e, (G/E(T))^{\tau(A)})=0$ for $e\in E(G)\backslash E(T)$;
\item $t(e,G/E(T))=1$ for $e \in A$ and $t(f,G/E(T))=0$ for $f \in E(G)\backslash E(T)\backslash A$.
\end{enumerate}
Thus we can take $A$ to be the set $A_0$.

Suppose that $A\cap E(T)=\{e_1,\cdots,e_s\}$, $s\geq 1$. Since $G^{\tau(A)}$ is checkerboard colourable, then by Lemma \ref{l 10} $G^{\tau(A)\tau(E_{e_s})}$ is checkerboard colourable. Hence $G^{\tau(A\Delta E_{e_s})}=G^{\tau(A)\tau(E_{e_s})}$ is checkerboard colourable. Note that $(A\Delta E_{e_s})\cap E(T)=\{e_1,\cdots,e_{s-1}\}$. By induction hypothesis, we have
$$
A\Delta E_{e_s}=A_0\Delta E_{e_1}\Delta \cdots\Delta E_{e_{s-1}}.
$$
Thus, $A=A_0\Delta E_{e_1}\Delta \cdots\Delta E_{e_{s-1}}\Delta E_{e_s}$.
\end{proof}

\begin{remark}
Theorem \ref{th 2} is independent of the choice of the spanning tree $T$.
\end{remark}

\begin{corollary}\label{sss}
Let $G$ be an Eulerian ribbon graph. Then $G$ has $2^{|V(G)|-1}$ checkerboard colorable partial petrials (some of them may be equivalent).
\end{corollary}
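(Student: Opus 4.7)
The plan is to view Theorem~\ref{th 2} as providing a parametrisation of the ``checkerboard-colourable'' subsets of $E(G)$ by the subsets of $E(T)$, for any fixed spanning tree $T$, and then count.

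First I would fix a spanning tree $T$ of $G$, so that $|E(T)| = |V(G)|-1$, and recall the key structural fact hidden in the definitions: the reference set $A_0 = \{e \in E(G)\setminus E(T) : t(e, G/E(T)) = 1\}$ is contained in $E(G)\setminus E(T)$, while each adjoint set $E_e$ (for $e \in E(T)$) satisfies $E_e \cap E(T) = \{e\}$ (by its very definition, the other elements of $E_e$ lie in $E(G)\setminus E(T)$). These are the two observations that make the rest of the argument purely combinatorial.

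Next I would define a map
\[
\phi : 2^{E(T)} \longrightarrow 2^{E(G)}, \qquad \phi(S) = A_0 \,\Delta\, \bigtriangleup_{e \in S} E_e,
\]
and verify that $\phi$ is a bijection onto the collection $\mathcal{A} := \{A \subseteq E(G) : G^{\tau(A)} \text{ is checkerboard colourable}\}$. Surjectivity is exactly Theorem~\ref{th 2}. For injectivity, the observation above gives $\phi(S) \cap E(T) = S$ for every $S \subseteq E(T)$, since $A_0$ contributes nothing to $E(T)$ and each $E_e$ contributes precisely $\{e\}$ to $E(T)$; hence distinct subsets $S$ of $E(T)$ produce distinct images under $\phi$. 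Consequently $|\mathcal{A}| = |2^{E(T)}| = 2^{|V(G)|-1}$.

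There is no real obstacle here beyond checking that $\phi$ is well-defined and bijective; the content of the corollary is entirely encoded in Theorem~\ref{th 2}, and the only nontrivial input is the disjointness behaviour of $A_0$ and the $E_e$'s on $E(T)$. Finally I would add a one-line remark to justify the parenthetical ``some of them may be equivalent'': the count $2^{|V(G)|-1}$ is of the parameter sets $A$, not of isomorphism classes of ribbon graphs, so the statement is consistent with the possibility that different choices of $A$ yield equivalent partial Petrials.
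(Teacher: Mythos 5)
Your proposal is correct and takes essentially the same route as the paper: the paper's one-line proof (``for each subset of $E(T)$, there is a unique $A$ such that $G^{\tau(A)}$ is checkerboard colourable'') is exactly the bijection $S \mapsto A_0 \,\Delta\, \bigtriangleup_{e\in S} E_e$ that you spell out, with your observation that $\phi(S)\cap E(T)=S$ supplying the uniqueness.
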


\begin{proof}
Let $T$ be a spanning tree of $G$. Note that for each subset of $E(T)$, there is a unique $A$ such that $G^{\tau(A)}$ is checkerboard colourable.
\end{proof}

\begin{example}
A 4-regular ribbon graph $G$ is shown in Figure \ref{f 10} (left). Suppose that $T$ is a spanning tree of $G$ with $E(T)=\{e_1,e_2,e_3,e_4\}$. $\overline{G/E(T)}$ is shown in Figure \ref{f 10} (right).
By Figure \ref{f 10} (right), we have $E_1=\{e_1,e_7,e_8,e_{10}\}$, $E_2=\{e_2,e_6\}$, $E_3=\{e_3,e_6,e_7,e_8\}$ and $E_4=\{e_4,e_6,e_8,e_9\}$ and $A_0=\{e_5,e_6,e_9,e_{10}\}$ as listed in Table \ref{ta1}.
Then $G^{\tau(A)}$ is checkerboard colourable for $A\subseteq E(G)$ if and only if $A$ is one of the following subsets listed in Table \ref{ta11} (left).

\begin{figure}[ht]
\centering
\includegraphics[width=5.1in]{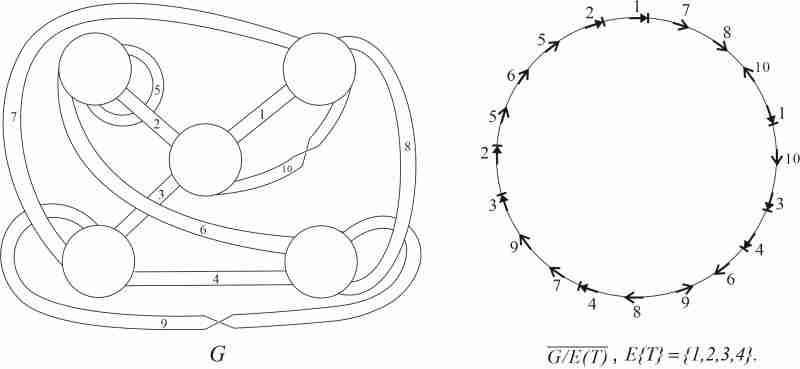}
\caption{An example.}
\label{f 10}
\end{figure}

\begin{table}[htpb]
\centering
\begin{tabular}{|c|c|c|}\hline
  edge    &   \# of vertex line segments &  $t$  \\ \hline
  $e_5$&    $2$  &  $1$                          \\ \hline
  $e_6$&    $6$  &  $1$                          \\ \hline
  $e_7$&    $5$  &  $0$                          \\ \hline
  $e_8$&    $5$  &  $0$                          \\  \hline
  $e_9$&    $4$  &  $1$                          \\ \hline
  $e_{10}$& $2$  &  $1$                          \\  \hline
\end{tabular}
\caption{The index $t$.}\label{ta1}
\end{table}

\begin{table}[htp]
\centering
\begin{tabular}{|c|c|}\hline
 the edge subset $A$ of $E(G)$     &  the corresponding subset $A\cap E(T)$ of $E(T)$  \\ \hline
  $\{e_5,e_6,e_9,e_{10}\}$&    $\emptyset$                           \\ \hline
  $\{e_1,e_5,e_6,e_7,e_8,e_9\}$&    $\{e_1\}$                           \\ \hline
  $\{e_2,e_5,e_9,e_{10}\}$&    $\{e_2\}$                           \\ \hline
  $\{e_3,e_5,e_7,e_8,e_9,e_{10}\}$&    $\{e_3\}$                           \\  \hline
  $\{e_4,e_5,e_8,e_{10}\}$&    $\{e_4\}$                           \\ \hline
  $\{e_1,e_2,e_5,e_7,e_8,e_9\}$&    $\{e_1,e_2\}$                           \\ \hline
  $\{e_1,e_3,e_5,e_9\}$&    $\{e_1,e_3\}$                           \\ \hline
  $\{e_1,e_4,e_5,e_7\}$&    $\{e_1,e_4\}$                           \\  \hline
  $\{e_2,e_3,e_5,e_6,e_7,e_8,e_9,e_{10}\}$&    $\{e_2,e_3\}$                           \\ \hline
  $\{e_2,e_4,e_5,e_6,e_8,e_{10}\}$&    $\{e_2,e_4\}$                           \\ \hline
  $\{e_3,e_4,e_5,e_6,e_7,e_{10}\}$&    $\{e_3,e_4\}$                           \\ \hline
  $\{e_2,e_3,e_4,e_5,e_7,e_{10}\}$&    $\{e_2,e_3,e_4\}$                           \\  \hline
  $\{e_1,e_3,e_4,e_5,e_6,e_8\}$&    $\{e_1,e_3,e_4\}$                           \\ \hline
  $\{e_1,e_2,e_4,e_5,e_6,e_7\}$&    $\{e_1,e_2,e_4\}$                           \\ \hline
  $\{e_1,e_2,e_3,e_5,e_6,e_9\}$&    $\{e_1,e_2,e_3\}$                           \\ \hline
  $\{e_1,e_2,e_3,e_4,e_5,e_8\}$&    $\{e_1,e_2,e_3,e_4\}$                           \\  \hline
  \end{tabular}
\caption{$A$ and $A\cap E(T)$.}\label{ta11}
\end{table}

\end{example}

\section{Ellis-Monaghan and Moffatt's problems}
\noindent

Ellis-Monaghan and Moffatt's two problems are first solved in \cite{YJ11}. We answer them as consequences of Theorem \ref{th 2}.

\begin{corollary}\label{51}
Any Eulerian ribbon graph has a checkerboard colourable partial Petrial.
\end{corollary}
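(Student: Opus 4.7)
The plan is to apply Theorem \ref{th 2} directly, specializing to the case where no tree edges are twisted. Concretely, given an Eulerian ribbon graph $G$, I would pick any spanning tree $T$ of $G$ and form the bouquet $G/E(T)$. Using the arrow presentation of $G/E(T)$, I would compute the indices $t(e,G/E(T))\in\{0,1\}$ for every non-tree edge $e\in E(G)\setminus E(T)$ and then set
\[
A_0=\{\,e\in E(G)\setminus E(T)\ :\ t(e,G/E(T))=1\,\}.
\]
The candidate partial Petrial is $G^{\tau(A_0)}$.

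To verify checkerboard colourability of $G^{\tau(A_0)}$, I would either invoke the ``if'' direction of Theorem \ref{th 2} with $s=0$ (so $A\cap E(T)=\emptyset$ and $A=A_0$), or equivalently retrace the relevant lemmas: since $(G/E(T))^{\tau(A_0)}=G^{\tau(A_0)}/E(T)$, the definition of $A_0$ forces $t(e,(G/E(T))^{\tau(A_0)})=0$ for every $e\in E(G)\setminus E(T)$, so by Lemma \ref{k11} the bouquet $G^{\tau(A_0)}/E(T)$ is checkerboard colourable. Because partial Petrials do not change the underlying graph, $G^{\tau(A_0)}$ is still Eulerian, and $T$ remains a spanning tree of $G^{\tau(A_0)}$; thus Theorem \ref{th 5} (the ``if'' direction of the equivalence furnished by Lemmas \ref{l 8} and \ref{l 9}) lifts the checkerboard colouring from the contracted bouquet back to $G^{\tau(A_0)}$.

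There is essentially no obstacle here: the corollary is a one-line consequence of Theorem \ref{th 2}, since that theorem parametrises \emph{all} checkerboard colourable partial Petrials of $G$ by subsets of $E(T)$, and the subset $\emptyset\subseteq E(T)$ already produces the witness $A_0$. The only thing worth remarking is that the proof is non-vacuous because $A_0$ is well defined for \emph{any} Eulerian ribbon graph (the index $t$ is defined whenever $G/E(T)$ has an alternating two-colouring of vertex line segments, which is exactly the Eulerian condition).
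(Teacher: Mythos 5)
Your proof is correct and takes essentially the same route as the paper: the paper deduces the corollary from Corollary \ref{sss}, whose proof is precisely the observation that Theorem \ref{th 2} produces a checkerboard colourable partial Petrial for each subset of $E(T)$, and your argument is the explicit $\emptyset\subseteq E(T)$ instance of that parametrisation (with the witness $A_0$ spelled out via Lemma \ref{k11} and Theorem \ref{th 5}).
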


\begin{proof}
It follows immediately from Corollary \ref{sss}.
\end{proof}

\begin{corollary}\label{52}
Any ribbon graph has a checkerboard colourable twisted dual.
\end{corollary}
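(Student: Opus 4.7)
The plan is to reduce to the Eulerian case via partial duality, then apply Corollary \ref{51}. Since every twisted dual of $G$ has the form $G^{\tau(B_1)\delta(B_2)\tau(B_3)}$ by Lemma \ref{l 2}, it suffices to exhibit edge subsets $B_2$ and $B_3$ (taking $B_1 = \emptyset$) such that $G^{\delta(B_2)\tau(B_3)}$ is checkerboard colourable. The obvious strategy is: first pick $B_2$ so that $G^{\delta(B_2)}$ is Eulerian, then invoke Corollary \ref{51} on the Eulerian ribbon graph $G^{\delta(B_2)}$ to supply $B_3$.

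For the first step I would simply choose $B_2 = E(T)$ for any spanning tree $T$ of $G$. By Lemma \ref{l 6}, $G^{\delta(E(T))}$ is a bouquet, i.e.\ it has exactly one vertex. The single vertex of a bouquet on $|E(G)|$ edges has degree $2|E(G)|$, which is even, so every bouquet is trivially Eulerian. Hence every ribbon graph admits at least one Eulerian partial dual; this is the key reduction and it requires no delicate combinatorics.

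For the second step, Corollary \ref{51} applied to the Eulerian bouquet $G^{\delta(E(T))}$ produces some $A \subseteq E(G)$ such that $(G^{\delta(E(T))})^{\tau(A)}$ is checkerboard colourable. Setting $B_3 = A$, the ribbon graph $G^{\delta(E(T))\tau(A)}$ is a twisted dual of $G$ (in the form of Lemma \ref{l 2} with $B_1 = \emptyset$, $B_2 = E(T)$, $B_3 = A$) and is checkerboard colourable, which is exactly what we need.

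There is no real obstacle to overcome: the genuine work has already been invested in Corollary \ref{51} (equivalently, in Theorem \ref{th 2}), and the reduction to the Eulerian case is immediate once one observes the bouquet identity above. The only point worth underlining in the write-up is that the ``Eulerian'' hypothesis of Corollary \ref{51} is automatically met by a bouquet, so no case analysis on the structure of $G$ is required.
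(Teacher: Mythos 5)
Your proof is correct and follows essentially the same route as the paper: pass to the partial dual with respect to a spanning tree to obtain a bouquet, observe that a bouquet is Eulerian, and then invoke Corollary \ref{51}. The only cosmetic difference is that the paper first splits into the cases ``$G$ Eulerian'' and ``$G$ not Eulerian,'' whereas you note that the bouquet reduction makes this case analysis unnecessary.
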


\begin{proof}
Let $G$ be a ribbon graph. Without loss of generality, we assume that $G$ is connected. If $G$ is Eulerian, then Corollary \ref{52} follows from Corollary \ref{51}. If $G$ is not Eulerian, let $T$ be a spanning tree, then $G^{\delta(E(T))}$ is a bouquet, hence Eulerian. By Corollary \ref{51}, Corollary \ref{52} also holds.
\end{proof}

In the following, given a ribbon graph, we give a complete characterization of its all $k$-regular checkerboard colorable twisted duals.

\begin{theorem}\label{th 3}
Let $G$ be a ribbon graph, $|E(G)|=m$ and $k$ be an even positive integer. Then every $k$-regular checkerboard colourable twisted dual of $G$ can be written as the form $G^{{\tau(A_1)}{\delta(A_2)}{\tau(A_3)}}$, where
\begin{enumerate}
\item $A_1\subseteq E(G)$;
\item $A_2=E(Q) \Delta \{e_1,\cdots,e_n\}$, where $Q$ is a spanning quasi-tree of $G^{\tau(A_1)}$ and
$\{C_1, \cdots, C_n\}$, $n=\frac{2m}{k}-1$, is a shorter marking arrow sequence set of $G^{\tau(A_1)\delta(E(Q))}$ such that $d(C_i)=k$ and $e_i$ is the edge corresponding to the shorter marking arrow sequence $C_i$ for $i=1,2,\cdots,n$;
\item $A_3=A_0 \Delta E_{e_1}\Delta \cdots\Delta E_{e_s}$,
where $$A_0=\{e\in E(G)\backslash E(T)~|~ t(e,G^{\tau(A_1)\delta(A_2)}/E(T))=1 \},$$ $T$ is a spanning tree of $G^{\tau(A_1)\delta(A_2)}$, $A_3\cap E(T)=\{e_1,\cdots,e_s\}$ and $E_{e_i}$ is the adjoint set of $e_i$, $1\leq i\leq s$.
\end{enumerate}
\end{theorem}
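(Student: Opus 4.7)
The plan is to decompose the problem via Lemma \ref{l 2} into three successive stages, each governed by a previously-proved theorem, and crucially to exploit the fact that the partial Petrial only introduces half-twists to ribbons, so it preserves both the underlying graph and its degree sequence.

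First, I would start from an arbitrary $k$-regular checkerboard colourable twisted dual $H$ of $G$ and invoke Lemma \ref{l 2} to write $H = G^{\tau(B_1)\delta(B_2)\tau(B_3)}$ for some $B_1, B_2, B_3 \subseteq E(G)$. I then simply set $A_1 := B_1$, which imposes no restriction and delivers condition (1). Next, because partial Petrial does not change the underlying graph, the underlying graphs of $G^{\tau(A_1)\delta(B_2)}$ and of $H = (G^{\tau(A_1)\delta(B_2)})^{\tau(B_3)}$ coincide, so $G^{\tau(A_1)\delta(B_2)}$ is itself $k$-regular. Regarding $G^{\tau(A_1)}$ as the base ribbon graph and applying Theorem \ref{main1} to the subset $B_2 \subseteq E(G^{\tau(A_1)})$, I obtain the spanning quasi-tree $Q$ of $G^{\tau(A_1)}$ and the shorter marking arrow sequence set $\{C_1,\ldots,C_n\}$ of $G^{\tau(A_1)\delta(E(Q))}$ with $n = \frac{2m}{k}-1$ and $d(C_i) = k$, such that $B_2 = E(Q)\Delta\{e_1,\ldots,e_n\}$. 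Setting $A_2 := B_2$ gives condition (2).

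For the final stage, I observe that $G^{\tau(A_1)\delta(A_2)}$ is $k$-regular with $k$ even, hence Eulerian, and that $H = (G^{\tau(A_1)\delta(A_2)})^{\tau(B_3)}$ is checkerboard colourable by hypothesis. Taking $G^{\tau(A_1)\delta(A_2)}$ as the base Eulerian ribbon graph and any spanning tree $T$, Theorem \ref{th 2} forces $B_3 \cap E(T) = \{e_1,\ldots,e_s\}$ and $B_3 = A_0 \Delta E_{e_1} \Delta \cdots \Delta E_{e_s}$ with $A_0$ and the adjoint sets $E_{e_i}$ defined as in the statement. Setting $A_3 := B_3$ gives condition (3). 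The converse inclusion, although not formally required by the wording, is immediate from the forward directions of the same two theorems: Theorem \ref{main1} guarantees that the prescribed $A_2$ makes $G^{\tau(A_1)\delta(A_2)}$ $k$-regular, Theorem \ref{th 2} guarantees that the prescribed $A_3$ makes the final result checkerboard colourable, and the final $\tau(A_3)$ preserves $k$-regularity since it does not alter the underlying graph.

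The main obstacle I anticipate is purely notational bookkeeping: the subsets $B_2$ and $B_3$ a priori live in $E(G)$ but must be transferred to subsets of $E(G^{\tau(A_1)})$ and $E(G^{\tau(A_1)\delta(A_2)})$ respectively via the natural edge bijections provided by Lemma \ref{l 1}(1), and one must confirm that the structural data invoked by Theorems \ref{main1} and \ref{th 2}, namely spanning quasi-trees and trees, orientability and alternation of loops, shorter marking arrow sequences, and the index $t$ together with the adjoint sets, are all intrinsic to the intermediate ribbon graphs and hence unambiguous under these identifications. Beyond this identification, no new combinatorial input is required; the theorem is essentially the concatenation of Theorem \ref{main1} and Theorem \ref{th 2} stitched together by Lemma \ref{l 2}.
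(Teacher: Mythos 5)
Your proposal is correct and follows essentially the same route as the paper, whose proof of this theorem is precisely the one-line observation that it follows from Lemma \ref{l 2} together with Theorems \ref{main1} and \ref{th 2}. You have simply spelled out the details the paper leaves implicit (that $\tau$ preserves the degree sequence, that evenness of $k$ gives the Eulerian hypothesis needed for Theorem \ref{th 2}, and the edge identifications of Lemma \ref{l 1}(1)), all of which are handled correctly.
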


\begin{proof}
This theorem follows from Lemma \ref{l 2} and Theorems \ref{main1} and \ref{th 2}.
\end{proof}

We conclude this paper with the following example.

\begin{example}
Let $G$ be the ribbon graph shown in Figure \ref{f 10} (left). Then $m=10$. We try to find its $4$-regular checkerboard colourable twisted duals. Then $n={2m\over k}-1=4$.

\noindent{\bf Step 1.} We randomly take $A_1=\{e_9,e_{10}\}$ and consider $G^{\tau(A_1)}$ as shown in Figure \ref{e11}.

\begin{figure}[htbp]
\centering
\includegraphics[width=2.5in]{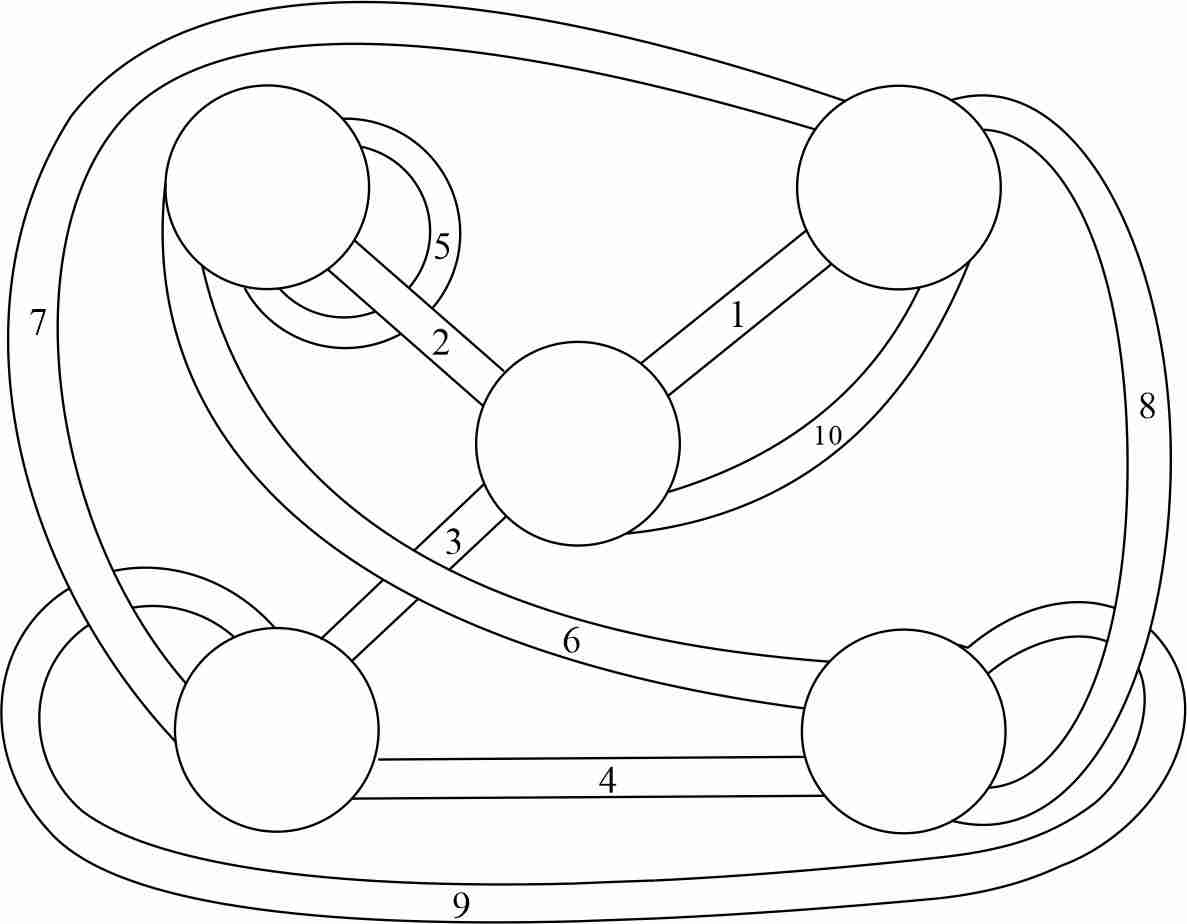}
\caption{$G^{\tau(A_1)}$.}
\label{e11}
\end{figure}
\noindent{\bf Step 2.} Now take a spanning quasi-tree $Q$ of $G^{\tau(A_1)}$ with $E(Q)=\{e_1,e_2,e_3,e_4\}$ as shown in Figure \ref{e22}. In Figure \ref{e33}, we give the arrow presentation of the bouquet $G^{\tau(\{e_9,e_{10}\})\delta(\{e_1,e_2,e_3,e_4\})}$. Then, in $G^{\tau(A_1)\delta(E(Q))}$, $\{C_1, C_2, C_3, C_4\}$ is a shorter marking arrow sequence set with $d(C_i)=4$ for $i=1,2,3,4$, where $C_1=e_1e_7e_8e_{10}$,  $C_2=e_2e_5e_6e_{5}$, $C_3=e_3e_4e_6e_9e_8e_4e_7e_9$ and $C_4=e_9e_8e_4e_7$. Thus,
$$A_2=\{e_1,e_2,e_3,e_4\}\Delta\{e_1,e_2,e_3,e_9\}=\{e_4,e_9\}.$$

\begin{figure}[htbp]
\centering
\includegraphics[width=2in]{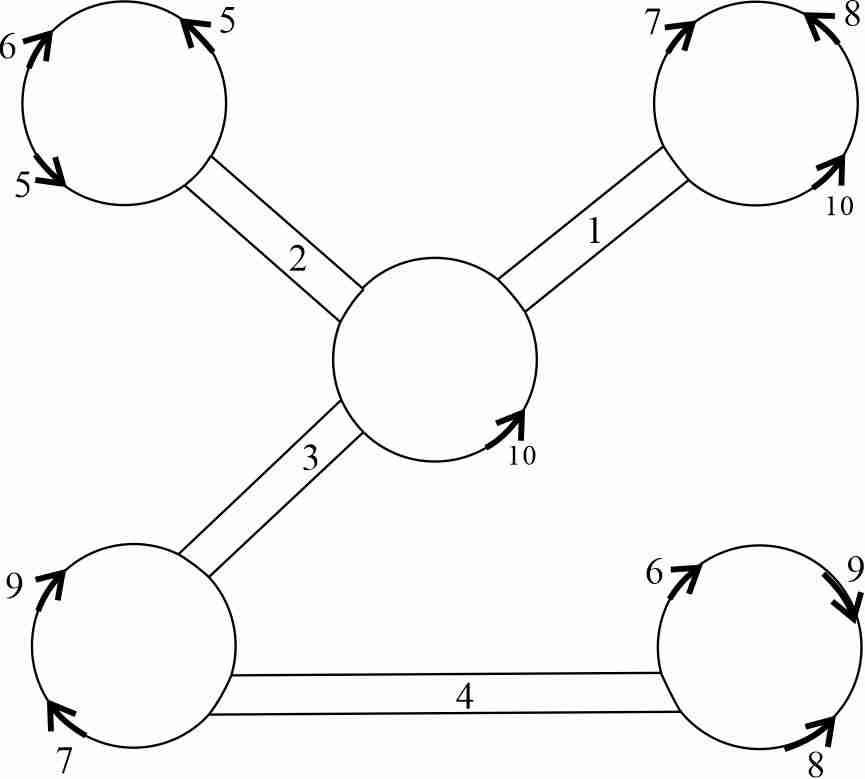}
\caption{A spanning quasi-tree $Q$ of $G^{\tau(A_1)}$.}
\label{e22}
\end{figure}

\begin{figure}[htbp]
\centering
\includegraphics[width=2.3in]{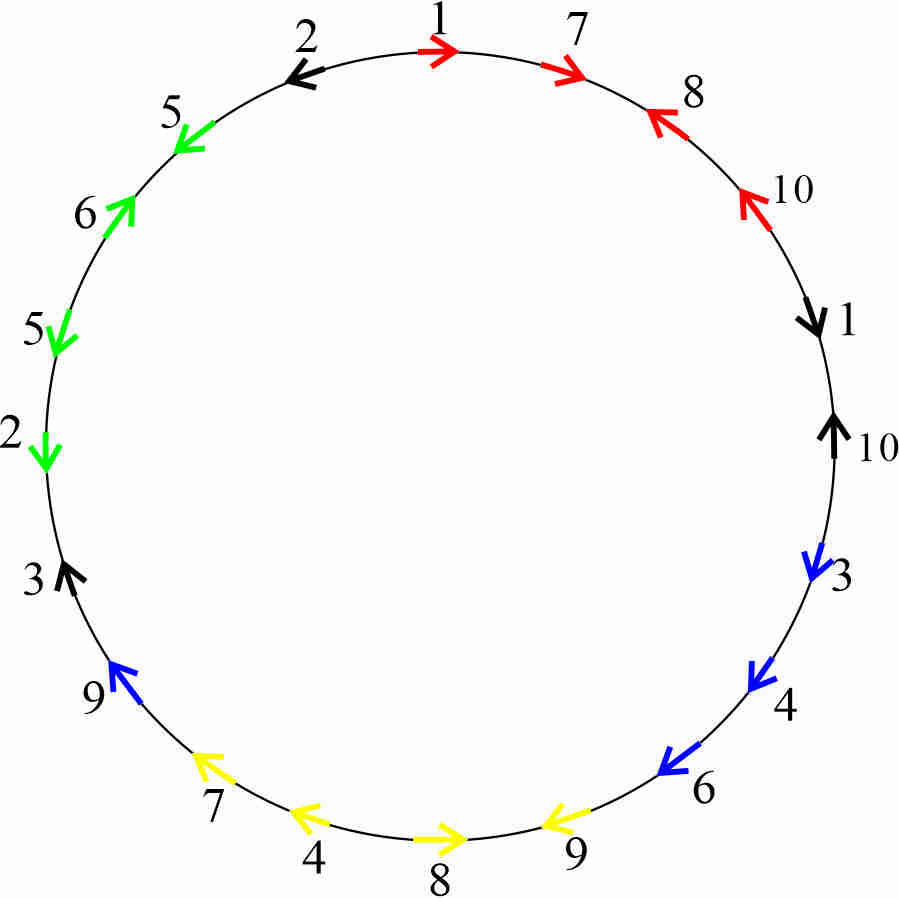}
\caption{The arrow presentation of $G^{\tau(A_1)\delta(E(Q))}$.}
\label{e33}
\end{figure}

\noindent{\bf Step 3.} Now we consider $G^{\tau(A_1)\delta(A_2)}$ as shown in Figure \ref{e55} which is obtained from Figure \ref{e11} via Figure \ref{e44}. Take a spanning tree $T$ of $G^{\tau(A_1)\delta(A_2)}$ with $E(T)=\{e_1,e_2,e_3,e_4\}$. $\overline{G^{\tau(A_1)\delta(A_2)}/E(T)}$ is shown in Figure \ref{e77} via Figure \ref{e66}. Then we obtain $A_3=\{e_5,e_7\}$ as listed in Table \ref{ta}.

\begin{figure}[htbp]
\centering
\includegraphics[width=2.5in]{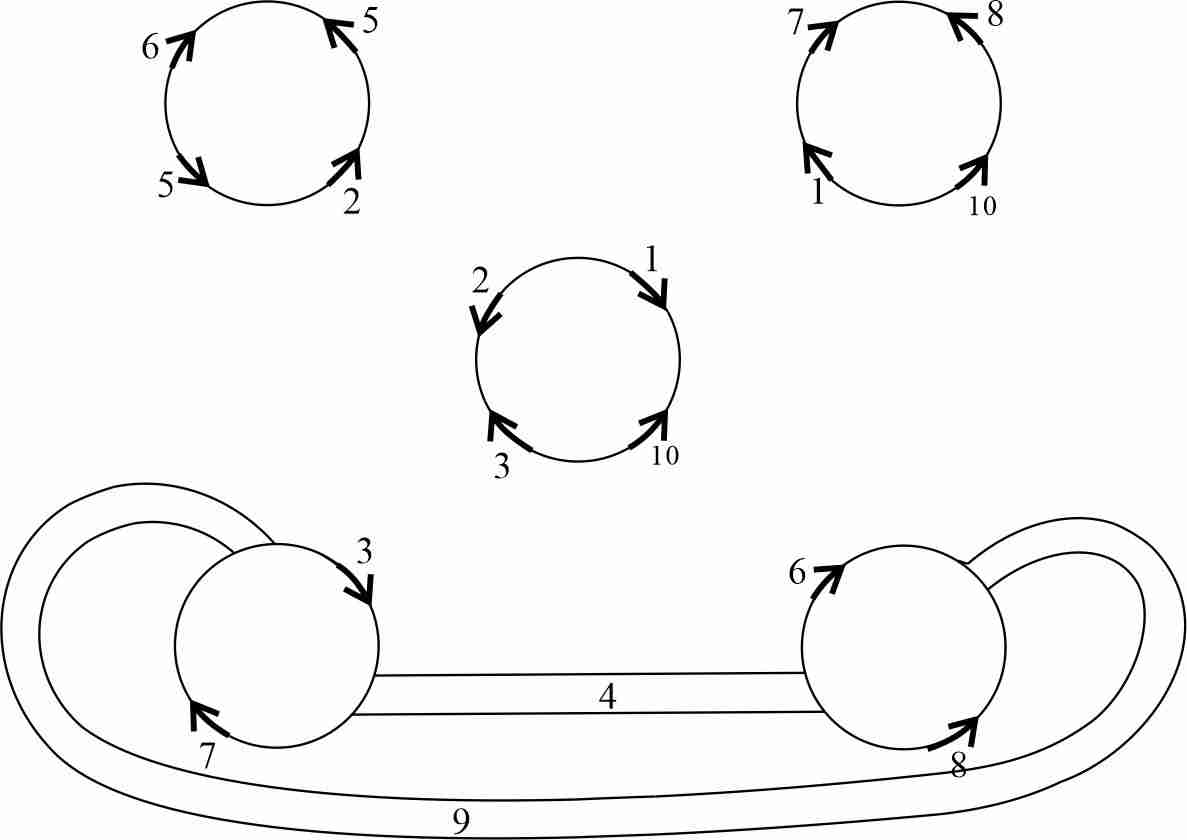}
\caption{Intermediate step for obtaining $G^{\tau(A_1)\delta(A_2)}$.}
\label{e44}
\end{figure}

\begin{figure}[htbp]
\centering
\includegraphics[width=2.5in]{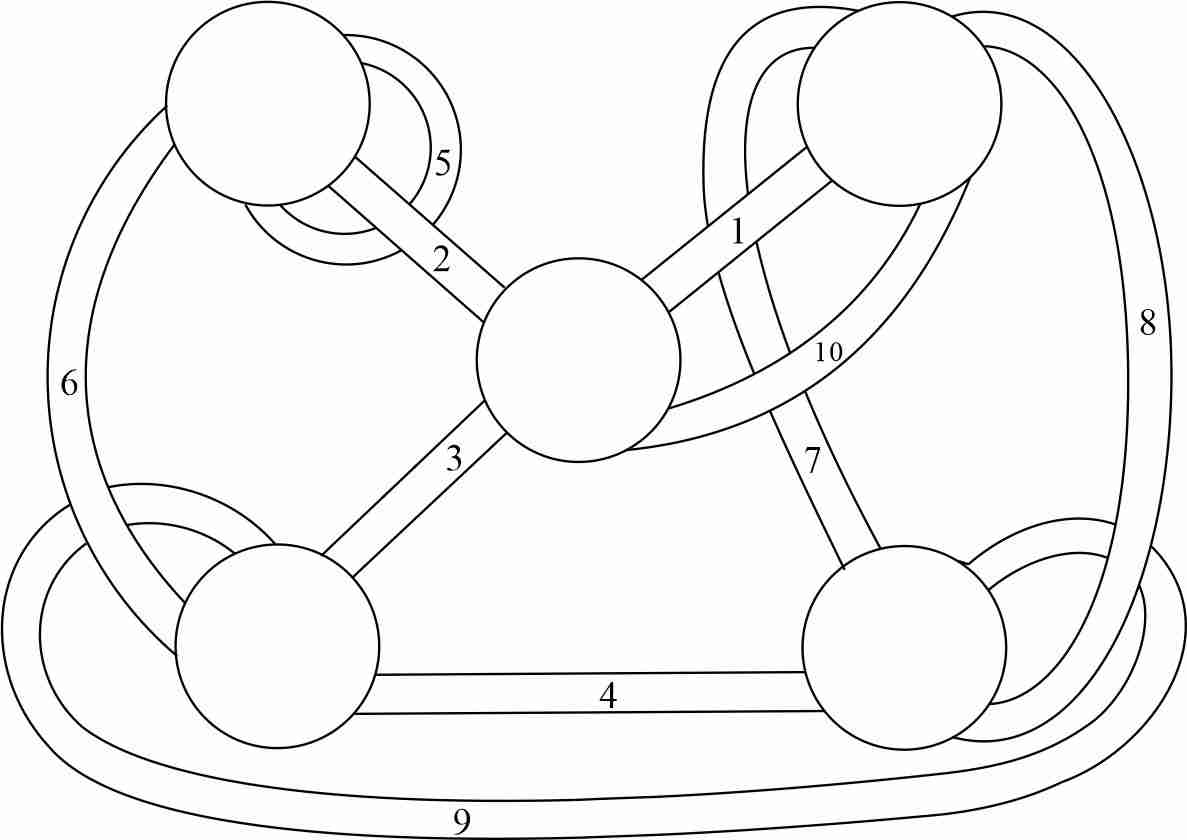}
\caption{$G^{\tau(A_1)\delta(A_2)}$.}
\label{e55}
\end{figure}

\begin{figure}[htbp]
\centering
\includegraphics[width=2in]{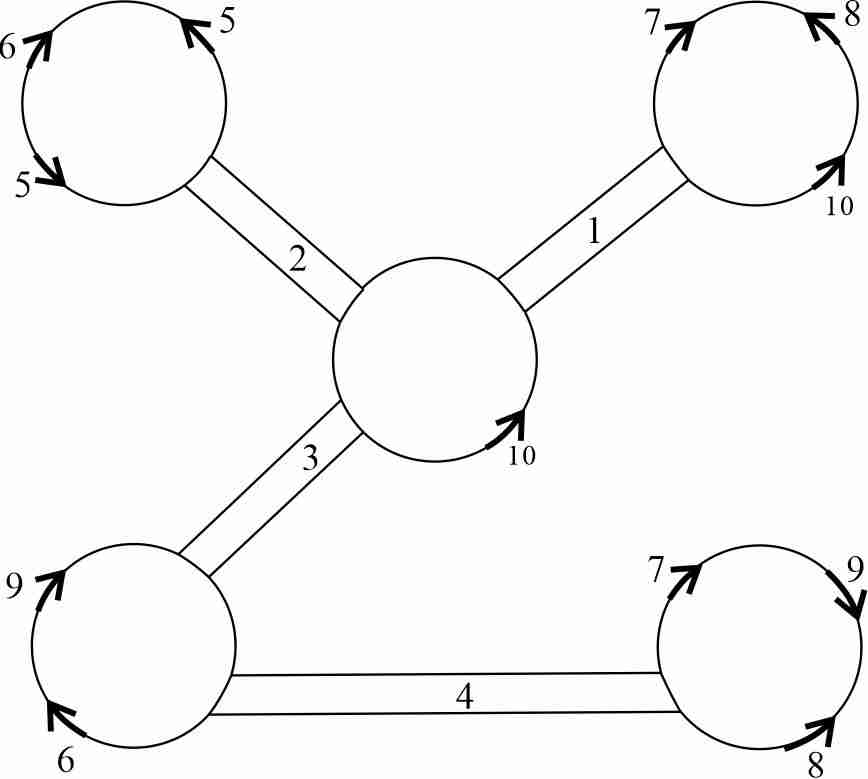}
\caption{Intermediate step for obtaining $\overline{G^{\tau(A_1)\delta(A_2)}/E(T)}$.}
\label{e66}
\end{figure}

\begin{figure}[htbp]
\centering
\includegraphics[width=2.3in]{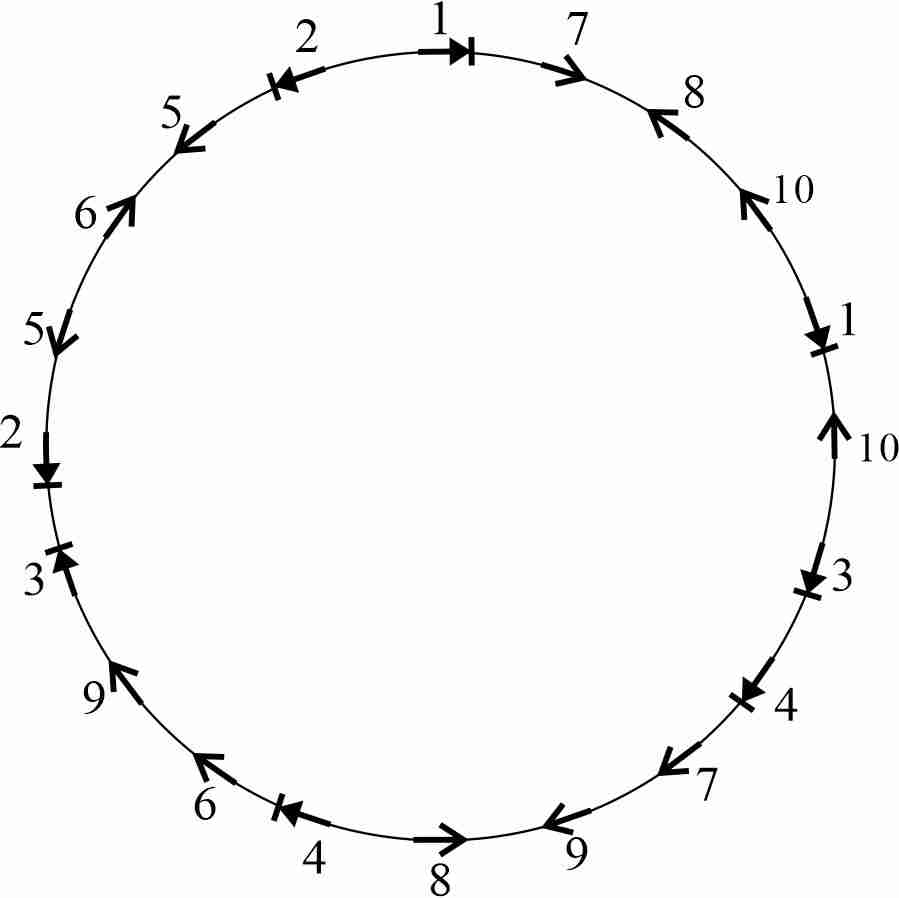}
\caption{$\overline{G^{\tau(A_1)\delta(A_2)}/E(T)}$.}
\label{e77}
\end{figure}

\begin{table}[t]
\centering
\begin{tabular}{|c|c|c|}\hline
  edge    &   \# of vertex line segments &  $t$  \\ \hline
  $e_5$&    $2$  &  $1$                          \\ \hline
  $e_6$&    $3$  &  $0$                          \\ \hline
  $e_7$&    $4$  &  $1$                          \\ \hline
  $e_8$&    $5$  &  $0$                          \\  \hline
  $e_9$&    $3$  &  $0$                          \\ \hline
  $e_{10}$& $1$  &  $0$                          \\  \hline
\end{tabular}
\caption{The index $t$.}\label{ta}
\end{table}

Finally we obtain $G^{\tau(A_1)\delta(A_2)\tau(A_3)}$ which is $4$-regular checkerboard colourable as shown in Figure \ref{e88}.

\begin{figure}[t]
\centering
\includegraphics[width=2.5in]{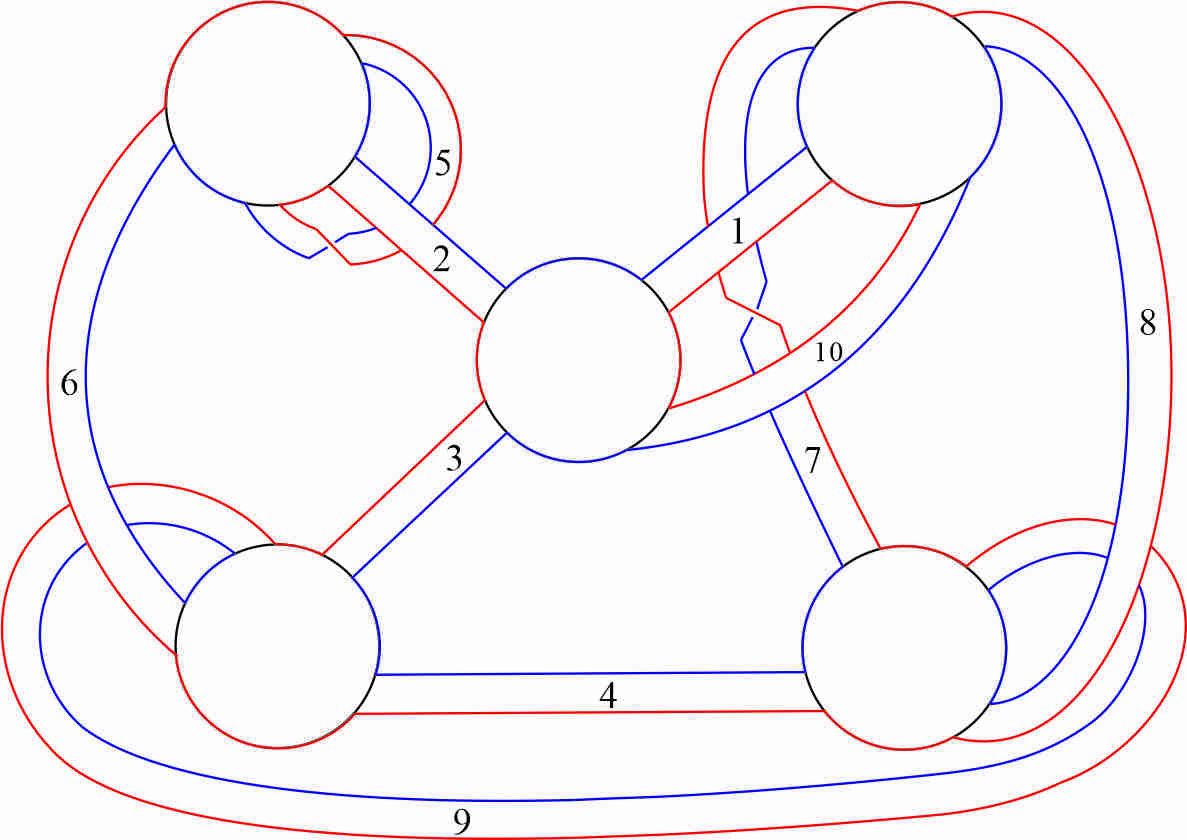}
\caption{$G^{\tau(A_1)\delta(A_2)\tau(A_3)}$.}
\label{e88}
\end{figure}
\end{example}

\bibliography{bibfile}

\begin{thebibliography}{99}

\bibitem{BR02} B. Bollob\'{a}s and O. Riordan, A polynomial of graphs on surfaces, \emph{Math. Ann.}, \textbf{323} (2002), 81-96.

\bibitem{C09} S. Chmutov, Generalized duality for graphs on surfaces and the signed Bollob\'{a}s-Riordan polynomial, \emph{J. Comb. Theory Ser. B}, \textbf{99(3)} (2009), 617-638.

\bibitem{Chmu} S. Chmutov and I. Pak, The Kauffman bracket of virtual links and the Bollob\'{a}s-Riordan polynomial, \emph{Moscow Math. J.}, \textbf{7(3)} (2007), 409-418.

\bibitem{Chmu2} S. Chmutov and J. Voltz, Thistlethwaite's theorem for virtual links, \emph{J. Knot Theory Ramifications}, \textbf{17(10)} (2008), 1189-1198.

\bibitem{CN1} C. Chun, I. Moffatt, S. D. Noble and R. Rueckriemen, On the interplay between embedded graphs and delta-matroids, \emph{Proc. London Math. Soc.}, \textbf{118(3)} (2019), 675-700.

\bibitem{CN2} C. Chun, I. Moffatt, S. D. Noble and R. Rueckriemen, Matroids, delta-matroids and embedded graphs, \emph{J. Combin. Theory Ser. A}, \textbf{167} (2019), 7-59.

\bibitem{Lin1} O. Dasbach, D. Futer, E. Kalfagianni, X.-S. Lin and N. Stoltzfus, The Jones polynomial and graphs on surfaces, \emph{J. Comb. Theory Ser. B}, \textbf{98(2)} (2008), 384-399.

\bibitem{DJM} Q. Deng, X. Jin and M. Metsidik, Characterizations of bipartite and Eulerian partial duals of ribbon graphs, \emph{Discrete Math.}, \textbf{343} (2020), 111637.

\bibitem{QJK} Q. Deng, X. Jin and L. H. Kauffman, Graphical virtual links and a polynomial of signed cyclic graphs, \emph{J. Knot Theory Ramifications}, \textbf{27(10)} (2018), 1850054, 14 pp.

\bibitem{EM12} J. A. Ellis-Monaghan and I. Moffatt, Twisted duality for embedded graphs, \emph{T. Am. Math. Soc.}, \textbf{364(3)} (2012), 1529-1569.

\bibitem{EM13} J. A. Ellis-Monaghan and I. Moffatt, Graphs on surfaces, Springer New York, 2013.

\bibitem{EM15} J. Ellis-Monaghan and I. Moffatt, Evaluations of topological Tutte polynomials, \emph{Comb. Probab. Comput.}, \textbf{24(3)} (2015), 556-583.

\bibitem{GT} J. L. Gross and T. W. Tucker, Topological graph theory, Wiley-Interscience Publication, 1987.

\bibitem{HM} S. Huggett and I. Moffatt, Bipartite partial duals and circuits in medial graphs, \emph{Combinatorica}, \textbf{33(2)} (2013), 231-252.

\bibitem{Jo} G. A. Jones and  M. Ziv-Av, Petrie duality and the Anstee-Robertson graph, \emph{Symmetry}, \textbf{7} (2015), 2206-2223.

\bibitem{KRV11} T. Krajewski, V. Rivasseau and F. Vignes-Tourneret, Topological graph polynomials and quantum field theory, Theory Part II: Mehler Kernel Theories, \emph{Ann. Henri Poincar\'{e}}, \textbf{12} (2011), 483-545.

\bibitem{Me11} M. Metsidik, Characterization of some properties of ribbon graphs and their partial duals, PhD thesis, Xiamen University, 2017.

\bibitem{MJ} M. Metsidik and X. Jin, Eulerian partial duals of plane graphs, \emph{J. Graph. Theory}, \textbf{87(4)} (2018), 509-515.

\bibitem{M10} I. Moffatt, Partial duality and Bollob\'{a}s and Riordan's ribbon graph polynomial, \emph{Discrete Math.}, \textbf{310(1)} (2010), 174-183.

\bibitem{M11} I. Moffatt, A characterization of partially dual graphs, \emph{J. Graph Theory}, \textbf{67(3)} (2011), 198-217.

\bibitem{Mohar} B. Mohar and C. Thomassen, Graphs on Surfaces, Johns Hopkins University Press, Baltimore, MD, 2001.

\bibitem{V09} F. Vignes-Tourneret, The multivariate signed Bollob\'{a}s-Riordan polynomial, \emph{Discrete Math.}, \textbf{309} (2009), 5968-5981.

\bibitem{V11} F. Vignes-Tourneret, Non-orientable quasi-trees for the Bollob\'{a}s-Riordan polynomial, \emph{European J. Combin.}, \textbf{32} (2011), 510-532.

\bibitem{WI} S. E. Wilson, Operators over regular maps, \emph{Pacific J. Math.}, \textbf{81(2)} (1979), 559-568.

\bibitem{YJ11} Q. Yan and X. Jin, Checkerboard colourable twisted duals, (2018) http://arxiv.org/abs/1808.04047.


\end{thebibliography}

\end{document}